\documentclass[11pt]{amsart}
\usepackage{xcolor}
\usepackage[margin=1in]{geometry}
\usepackage{amsmath,amsthm,mathtools}
\usepackage[shortlabels]{enumitem}
\usepackage{setspace}
\usepackage{hyperref}
\usepackage{todonotes}

\usepackage[no-math]{fontspec}
\usepackage[cachedir=minted-cache]{minted}

\newmintinline[lean]{lean4}{bgcolor=white}
\newminted[leancode]{lean4}{escapeinside=!!, 
                            breaklines, 
                            fontsize=\normalsize}
\numberwithin{equation}{section}

\newtheorem{theorem}{Theorem}[section]
\newtheorem{proposition}[theorem]{Proposition}
\newtheorem{lemma}[theorem]{Lemma}

\theoremstyle{definition}
\newtheorem{definition}[theorem]{Definition}
\theoremstyle{remark}
\newtheorem{remark}[theorem]{Remark}
\newtheorem{question}[theorem]{Question}

\newcommand{\E}{\mathbb E}
\newcommand{\Prob}{\mathbb P}
\newcommand{\N}{\mathbb N}
\newcommand{\R}{\mathbb R}
\newcommand{\Span}{\operatorname{span}}
\newcommand{\supp}{\operatorname{supp}}
\newcommand{\sgn}{\operatorname{sign}}

\newcommand{\eps}{\varepsilon}
\newcommand{\abs}[1]{\lvert #1 \rvert}
\newcommand{\norm}[1]{\lVert #1 \rVert}
\newcommand{\ip}[2]{\langle #1,#2\rangle}

\title[Stable Phase Retrieval for Independent Random Variables]{Stable Phase Retrieval for Spans of Independent Random Variables}

\author{Pedro Abdalla\textsuperscript{1}}
\author{Jaume de Dios Pont\textsuperscript{2}}
\author{Jo\~ao P. G. Ramos\textsuperscript{3}}
\author{Mitchell A. Taylor\textsuperscript{4}}

\begin{document}
\setstretch{1.2}

\begin{abstract} We prove that, after $L^2$ normalization, stable phase retrieval
holds over the $L^2$-spans of independent real-valued centered random variables if and only if all but possibly one coordinate satisfies a uniform two-sided
$L^1$ bound. This provides a complete characterization of stable phase retrieval for such subspaces, building upon the pioneering work of Calderbank--Daubechies--Freeman--Freeman and confirming the conjectured characterization communicated to us by those authors.

We provide two different proofs of this fact, both based on a decomposition of the $\ell^2$-coefficients of each random variable.  The first is a compactness proof, which makes use of the infinite divisibility of limit laws of tail sums. The second is a quantitative proof, which substitutes the compactness step with an explicit dichotomy based on anticoncentration estimates of Sperner type. This latter proof was partially LLM generated based on the ideas in the first proof and a considerable amount of guidance by the authors. An autoformalization of our main result in Lean 4 is also provided, following the ideas in the quantitative proof. 
\end{abstract}

\maketitle
\begingroup
\renewcommand{\thefootnote}{\arabic{footnote}}
\footnotetext[1]{Department of Mathematics, University of California, Irvine, Irvine, California 92697, USA}
\footnotetext[2]{Center for Data Science, New York University, New York, New York 10011, USA}
\footnotetext[3]{Instituto de Matem\'atica Pura e Aplicada (IMPA), Rio de Janeiro, RJ, Brazil}
\footnotetext[4]{Department of Mathematics, ETH Z\"urich, Z\"urich, Switzerland}
\endgroup
\setcounter{footnote}{0}

\section{Introduction}

\subsection{Historical background} Phase retrieval is a class of inverse problems in which the measurement device captures only magnitude information, while the phase remains unobserved. It dates back at least as far as the seminal Pauli problem in quantum mechanics, which asks to what extent information about a wave function can be encoded in the magnitudes of the function and its Fourier transform \cite{MR3552875,MR3256781,pauli1933allgemeinen,ramos-sousa-pauli}. It also arises naturally in optics and signal processing, where detectors can typically only record  the  intensity of the signal \cite{MR3069958,eldar2014stability,grohs2020survey,sanz1984phase}. All of these problems can be modeled as the recovery (up to acceptable ambiguities) of a vector $x$ from the phaseless measurement $|Tx|$, where $T: H \to X$ is an injective linear mapping from a signal space $H$ to a function space $X$. Such problems can be equivalently formulated as the recovery of a function $f\in T(H)$ from its magnitude $|f|\in X$, again up to acceptable ambiguities. In the real-valued setting, the unavoidable
ambiguity is multiplication by a global sign, and the natural question is whether
this is the only ambiguity.  

The finite-dimensional phase retrieval problem has been extensively studied, both for random and structured measurements. The fundamental tasks of developing injectivity and stability criteria have been thoroughly investigated, and practical tools such as polynomial-time algorithms to solve the phase retrieval problem
have been developed in special cases \cite{bandeira2014saving,MR3260258,MR3069958,DB,eldar2014stability,MR3746047,KS}.
In contrast, the infinite-dimensional theory is far more delicate and underdeveloped. In particular, although injectivity may hold in many situations of interest \cite{cahill2016infinite,freeman2025cahill}, instability is often unavoidable. Most notably, uniform stability cannot hold for continuous
frames over infinite-dimensional spaces \cite{AG-continuous-frames}, so canonical structured systems
such as the Gabor transform induce ill-posed phase retrieval problems
\cite{alaifari2021gabor,alaifari2025cheeger,grohs2021stable,steinerberger2020stability}. For general background on the phase retrieval problem for frames, we refer the reader to \cite{grohs2020survey}.

Mathematically, the uniqueness problem for phase retrieval asks whether the nonlinear mapping $f\mapsto |f|$ is injective on the image of the signal space, after quotienting by the natural global phase ambiguity. The stability problem asks whether the inverse of this map obeys a Lipschitz/H\"older  estimate, at least locally. Although both questions are relevant for applications, stability is indispensable for any feasible reconstruction algorithm, as injectivity alone is
too weak for numerical reconstruction.

One of the main conceptual contributions of the recent series of papers
\cite{calderbank2022stable,camunez2025characterization,christ2022examples,FOTP,garcia2025isometric,garcia2025existence} is to make the stability problem for phase retrieval intrinsic to the geometry of the
subspace rather than to a particular measurement operator $T$. This viewpoint permits a unified theory, where stability in infinite dimensions often holds.   More formally, we have the following definition.
\begin{definition}\label{DefinitionSPR}
    Let $(X,\|\cdot\|_X)$ be a real normed function space and let
$E\subset X$ be a subspace.  We say that $E$ does \emph{stable phase
retrieval} if there exists a constant $C<\infty$ such that
\begin{equation*}
\label{def:stable_pr}
\min_{\sigma\in\{\pm1\}}\norm{f-\sigma g}_X
\le C\,\norm{\abs{f}-\abs{g}}_X
\qquad\text{for all }f,g\in E.
\end{equation*}
\end{definition}

\subsection{Main result} In this article, we will be concerned with the quantitative aspects of phase retrieval for random subspaces $E$ spanned
by independent random variables in $L^2$. 
In this setting, Calderbank, Daubechies, Freeman and Freeman
\cite{calderbank2022stable} proved stable phase retrieval in a simplified model with disjoint indicator functions attached to each random variable. They  then proposed the following conjecture and communicated it to us, which would give a complete characterization of independent sequences doing stable phase retrieval in $L^2$: If $(\xi_i)$ is an orthonormal sequence of independent, centered
random variables in $L^2$, then $\overline{\Span}(\xi_i)$ does stable phase retrieval if and only if there exist constants $0<A\le B<1$ such that $A\le \norm{\xi_i}_{L^1}\le B$ for all but at most one index. 

The necessity of the two-sided uniform bound on the $L^1$ norms of the $\xi_i$ is standard. Indeed, the fact that $A$ is bounded away from zero prevents one from constructing an almost disjoint sequence in $\overline{\Span}(\xi_i)$. Clearly, disjointly supported functions will obstruct phase retrieval \cite{FOTP}, as if $d_1$ and $d_2$ are pairwise disjoint, then $f:=d_1+d_2$ and $g:=d_1-d_2$ have the same absolute value but are not multiples of each other. On the other hand, the condition $B<1$ excludes the Rademacher
random variables (random signs) which satisfy $|\xi_i|=1$ almost surely and therefore also violate phase retrieval.
The restriction to “at most one exceptional coordinate” is also sharp, as one Rademacher random variable will not cause issues, but two independent Rademachers  will immediately violate phase retrieval. 

The core question is whether these conditions are also sufficient for stable retrieval to hold. Christ, Pineau and the last-named author \cite{christ2022examples} proved that they suffice under the additional assumptions of finite fourth moment and identical distribution. 
The main theorem of this manuscript proves the above conjecture in full generality, hence  characterizing when a subspace spanned by independent, centered, real-valued random variables does stable phase retrieval in $L^2$. 
\begin{theorem}[Stable phase retrieval for independent random variables]\label{thm:main}
Let $\eta,\xi_2,\xi_3,\dots$ be independent real-valued random variables and
fix $\delta>0$.  Assume that
\[
\norm{\eta}_{L^2}=1,
\qquad
\E[\eta]=0,
\]
and, for every $i\ge2$,
\[
\norm{\xi_i}_{L^2}=1,
\qquad
\E[\xi_i]=0,
\qquad
\delta\le \norm{\xi_i}_{L^1}\le 1-\delta.
\]
Then there exists $C_{\delta,\eta}\ge1$ such that, for all
$X,Y\in \overline{\Span}(\eta,\xi_i:i\ge2)$,
\[
\min_{\sigma\in\{\pm1\}}\norm{X-\sigma Y}_{L^2}
\le
C_{\delta,\eta}\,\norm{\abs{X}-\abs{Y}}_{L^2}.
\]
\end{theorem}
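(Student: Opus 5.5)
We reduce to a lower bound in which $X$ and $Y$ appear only through products. Write $X=\sum a_i\xi_i$ and $Y=\sum b_i\xi_i$, with $\xi_1:=\eta$. Set $u=X+Y$ and $v=X-Y$. Then
\[
\norm{\abs{X}-\abs{Y}}_{L^2}^2=\E\bigl[(\abs{X}-\abs{Y})^2\bigr]\ge \tfrac14\,\E\bigl[\min(\abs{u},\abs{v})^2\bigr],
\]
because $\bigl|\abs{X}-\abs{Y}\bigr|=\min(\abs{X-Y},\abs{X+Y})$. Also $\min_\sigma\norm{X-\sigma Y}_{L^2}=\min(\norm{u},\norm{v})$. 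It therefore suffices to prove the following for all $u,v$ in the span:
\[
\E\bigl[\min(\abs{u},\abs{v})^2\bigr]\ge c\,\min(\norm{u}^2,\norm{v}^2).
\]
By homogeneity we may normalize $\norm{u}=1$ and $\norm{v}\ge 1$. Rescaling $v$ down to norm $1$ only decreases the left side, since $\min(\abs u,\abs{tv})$ is monotone in $t$. So we may take $\norm{u}=\norm{v}=1$, and the goal becomes a uniform lower bound on $\E[\min(\abs{u},\abs{v})^2]$ over pairs of unit vectors $u=\sum \alpha_i\xi_i$, $v=\sum\beta_i\xi_i$. By a truncation argument using Paley--Zygmund-type estimates, it is enough to lower-bound $\Prob(\abs u\ge c,\ \abs v\ge c)$ by a constant.

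The proof then decomposes the coefficients. Fix a small threshold $\tau$. The \emph{large} coordinates are the indices with $\alpha_i^2+\beta_i^2\ge\tau$; there are at most $2/\tau$ of them. The remaining coordinates form the \emph{tail}. The tail parts $u_T=\sum_{i\in T}\alpha_i\xi_i$ and $v_T$ have all coefficients small, so the pair $(u_T,v_T)$ behaves like a sum of many small independent terms. We would argue by compactness, following the strategy announced in the abstract. Suppose the bound fails for a sequence of pairs $(u^{(n)},v^{(n)})$ with $\E[\min^2]\to0$. Pass to a subsequence along which the finitely many large coefficients converge and the laws of $(\xi_i)$ at the large indices converge. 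The $L^1$ bounds together with $L^2$ normalization give tightness and uniform integrability of $\abs{\xi_i}$. Along the same subsequence, the joint law of the tail pair $(u_T,v_T)$ converges to an infinitely divisible law on $\R^2$, as a limit of a triangular array of infinitesimal independent vectors. The limit then consists of independent blocks whose joint law satisfies $\min(\abs U,\abs V)=0$ almost surely, and we must show that this is impossible.

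The heart of the argument is the rigidity step. If $\min(\abs U,\abs V)=0$ almost surely, then $U$ and $V$ are disjointly supported. Suppose first that the tail limit is nondegenerate. An infinitely divisible two-dimensional law whose support lies in the union of the two axes must have a Gaussian part that is degenerate along one axis and a L\'evy measure concentrated on the axes. In that case the two components are independent, and the support of their sum with the large-coordinate part cannot stay inside the axes unless one component vanishes identically. Suppose instead that the tail is degenerate. Then everything reduces to finitely many independent variables, and the claim reduces to this: if $U=\sum\alpha_i\zeta_i$ and $V=\sum\beta_i\zeta_i$ are built from finitely many independent centered variables with $UV=0$ almost surely, then $\alpha$ and $\beta$ are concentrated on one common index, or the variable at that index is a two-point symmetric law. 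Here the bound $\norm{\xi_i}_{L^1}\le 1-\delta$ is used. It persists in the limit by uniform integrability, and it says precisely that no limit $\xi_i$ is Rademacher. The bound $\norm{\xi_i}_{L^1}\ge\delta$ excludes limits that vanish with positive probability in a way that would create disjointness. The single exceptional variable $\eta$ is fixed, so it stays in the limit and contributes only one coordinate, and one coordinate alone cannot carry both $u$ and $v$ disjointly unless $u\parallel v$. That case is excluded because the minimizing pair would then have $\min(\abs u,\abs v)=\abs{u}\min(1,\abs{t})$, which has a positive second moment.

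The main obstacle is this rigidity classification: disjointly supported pairs built from independent limit laws must be trivial. The step we would try first is a product-support argument. Condition on all but two coordinates $i\ne j$ at which both $\alpha$ and $\beta$ have mass. This gives $(a+\alpha_i x+\alpha_j y)(b+\beta_i x+\beta_j y)=0$ on a product of supports. On a product set, vanishing of the product of two affine functions forces each of the two supports to have at most two points. Together with centering and unit variance, this leaves only Rademacher laws, which are excluded except possibly for $\eta$.
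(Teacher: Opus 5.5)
Your first reduction is correct, and it neatly replaces the orthogonal reduction of \cite{FOTP}. Pointwise, $\abs{\abs{X}-\abs{Y}}=\min(\abs{X+Y},\abs{X-Y})$, so it suffices to bound $\E[\min(\abs{u},\abs{v})^2]$ from below for unit vectors $u,v$ in the span.

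\textbf{The gap.} Everything you extract from the limit is distributional: independent blocks, centering, $L^1$ norms in $[\delta,1-\delta]$, an infinitely divisible tail, and $\min(\abs{U},\abs{V})=0$ a.s. Laws at the large indices, which move with $n$, do not keep unit variance in the limit; Fatou only gives $\E[\zeta^2]\le 1$. So $\norm{\zeta}_{L^1}\le 1-\delta$ excludes the standard Rademacher law but not $r\epsilon$ with $r\le 1-\delta$. Two independent copies $\zeta,\zeta'$ of $r\epsilon$ give the exactly disjoint pair $U=\zeta+\zeta'$, $V=\zeta-\zeta'$.

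This limit really occurs. Let $\zeta^{(p)}$ take the values $\pm\frac12$ with total probability $1-p$ and $\pm M_p$ with total probability $p$, where $M_p$ is fixed by $\E[(\zeta^{(p)})^2]=1$. Then $\E\abs{\zeta^{(p)}}\to\frac12$, so for small $p$ the hypotheses hold with $\delta=\frac14$. Take $\xi_{2k},\xi_{2k+1}$ to be independent copies of $\zeta^{(1/k)}$ for $k$ large. The unit vectors $u_k=(\xi_{2k}+\xi_{2k+1})/\sqrt2$ and $v_k=(\xi_{2k}-\xi_{2k+1})/\sqrt2$ satisfy $\min(\abs{u_k},\abs{v_k})=0$ outside an event of probability at most $2/k$, and they converge in law to such a disjoint pair.

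Consequently:
\begin{itemize}
\item Your intermediate claim that $\Prob(\abs{u}\ge c,\abs{v}\ge c)$ is bounded below is false.
\item No contradiction can come from the limit law alone. The step ``together with centering and unit variance, this leaves only Rademacher laws'' uses a normalization the limit does not have.
\end{itemize}
In this example $\E[\min(\abs{u_k},\abs{v_k})^2]=1-(\E\abs{\zeta^{(1/k)}})^2$ stays bounded below. All of it is carried by the escaping atoms, which are invisible in law.

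\textbf{The missing idea} is to use the $L^2$ smallness at the finite level, before taking any limit of the head laws. Since $\E\abs{uv}\le\norm{\min(\abs{u},\abs{v})}_{L^2}(\norm{u}_{L^2}+\norm{v}_{L^2})$, the product $uv$ (equal to $X^2-Y^2$ before your rescaling) is small in $L^1$. The paper tests $X^2-Y^2$ against the bounded probes of Lemma~\ref{lem:probes}. Their $L^\infty$ bounds come from the finite-level facts $\norm{\xi_i}_{L^2}=1$ and $\delta\le\norm{\xi_i}_{L^1}\le 1-\delta$. In your variables this gives $\alpha_i\beta_i\to0$ and $\alpha_i\beta_j+\alpha_j\beta_i\to0$ on the head; these are the paper's $a_i^2-b_i^2\to0$ and $a_ia_j-b_ib_j\to0$. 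This pins down the head without ever taking limits of its laws. Only the tail is passed to a limit. The resulting identity is pulled back to the finite level through uniform integrability and the $L^1$ lower bound of Lemma~\ref{lem:l1lower}, both of which are insensitive to loss of variance.

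\textbf{A second, separate issue.} A fixed threshold $\tau$ does not make your tail array infinitesimal, so its limit need not be infinitely divisible. You need a head of length $m_n\to\infty$ with strongly convergent coefficients, as in Lemma~\ref{lem:profile}.
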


We present two proofs of this result. The first is a qualitative and conceptually enlightening proof, while the second yields quantitative estimates for $C_{\delta,\eta}$. A common starting point for both proofs is 
a theorem of Freeman, Oikhberg, Pineau and the last-named author \cite{FOTP}, which identifies orthogonal vectors as the main source of instability for phase retrieval. More precisely, for subspaces of $L^2$, it suffices to prove stable phase retrieval for orthogonal pairs $f,g$ with $\norm{f}=1$ and $\norm{g}
\leq 1$.

Next, we explain the main ideas behind our proofs. In the first proof we assume, for the sake of a contradiction, that the result does not hold.  We consider a  decomposition of the coefficients of the alleged pairs of random variables $(\{X_n\}_n, \{Y_n\}_n)$ in the span of $\{\eta,\xi_2,\xi_3,\dots\}$ that generate a counterexample  by splitting them into two components. After rearranging the coordinates, the first component consists of the coordinates that converge strongly in the $\ell^2$ sense. We refer to this as the ``head" component. The remaining coordinates may be chosen to be diffuse, i.e., their $\ell^{\infty}$-norms vanish,  constituting the ``tail" component. 

To bypass the obstacle that the diffuse coordinates may not converge strongly, we rely on characterizations of weak limits of triangular arrays. By independence between the coordinates, we may condition on the head component without affecting the law of the tail component. More accurately, we may derive an identity in the limit for the pair $(\{X_n\}_n, \{Y_n\}_n)$ of the form $|H_X + R_X| = |H_Y + R_Y|,$ where $H_X = \tau H_Y$ for some universal sign $\tau \in \{\pm1\},$ and $(R_X,R_Y)$ is an infinitely divisible random vector that is  independent of $(H_X,H_Y)$. From this equality, it is relatively straightforward to reach a contradiction by conditioning on $(H_X,H_Y)$ and invoking geometric properties of infinitely divisible laws.

Our second proof focuses on (crude) quantitative estimates for the constant $C_{\delta,\eta}$ in Theorem \ref{thm:main}. To achieve this, it is necessary  to modify the argument involving infinitely divisible laws, as they only appear in the limit as $n$ diverges. To this end, the second proof keeps the same head--tail decomposition as the first proof, but it replaces the limit law of triangular arrays  by an
explicit concentration versus diffusion dichotomy for the tail part. A key extra ingredient is an anticoncentration estimate, which acts as a quantitative version of the fact that infinitely divisible laws cannot concentrate on two different lines. This latter geometric observation is a crucial fact exploited in the first proof and, for this reason, the two proofs address
the same theorem from complementary angles. More precisely, the compactness proof shows that a
bad sequence cannot exist by passing to a limiting model, while the quantitative proof
shows directly that every orthogonal pair already contains a definite amount of
modulus separation.

\subsection{Roadmap}
The rest of this manuscript is organized as follows.   Section~\ref{sec:preliminaries} collects
the basic definitions and tools used throughout the manuscript. 
Section~\ref{sec:compactness-proof}  presents the compactness proof of
Theorem~\ref{thm:main}.  Section~\ref{sec:quant-proof} gives a second proof of
Theorem~\ref{thm:main}, explaining how the asymptotic step can be bypassed by an
explicit decomposition together with an anticoncentration argument.
Section~\ref{sec:comments} discusses extensions, related directions
and open problems. The article then concludes with a discussion of the Lean formalization.  

\section*{Usage of large language models} 

Large language models played a significant role in the development of this work. Our starting point was a fully human proof of an $L^p$ version of Theorem~\ref{thm:main}  for $p> 2$, based on compactness and central limit ideas. Our goal, however, was the endpoint $L^2$ statement. By adapting arguments proposed by us and then found by GPT 5.4 in the literature, we managed to fine-tune our proof to achieve the endpoint $L^2$ result, by leveraging a non-trivial Gaussian part of the limiting infinitely divisible distributions (see Section \ref{ssec:gaussian-alt} for more details). 

A first key contribution of an LLM was the suggestion of the general principle that one may forego with the Gaussian nature of the limit and replace it with the fact that a nontrivial infinitely divisible random vector in $\R^2$ supported on the cross 
$$\{ (x,y) \in \R^2 \colon |x| = |y|\}$$
must, in fact, be supported on one of the two diagonals $\{x=y\}$ or $\{x=-y\}.$ This observation later became one of our conceptual pillars, as it was something that we had not fully appreciated when writing the first proof of our main result.

A second key contribution came at the quantitative level. Seeking an argument that was both explicitly quantitative and better suited  to formal verification, we prompted LLMs for an alternative to the infinitely divisible proof. The resulting strategy -- LLM-generated with significant human guidance and access to our original proof -- was combinatorial, of Sperner/antichain type, and yielded a quantitative (and arguably technically simpler) version of our main result, showing that under the appropriate hypotheses, closeness to the cross forces closeness to one of the two diagonals. This quantitative argument was then formalized and machine-checked in Lean. 

In this project, GPT 5.4 was used mainly for mathematical exploration, and Claude Opus 4.6 for assistance with the Lean formalization. Later versions of these models were also employed at the very end of the project to clean up the Lean formalization and to check for grammatical errors in the paper. The authors independently checked all of the final statements and  substantially rewrote the proofs. 
\section*{Acknowledgments} 
P.A.~was supported by the NSF and by the Simons Collaboration on Theoretical Foundations of Deep Learning. J.P.G.R.~was supported by the FCT through project SHADE (project 2023.17881.ICDT, DOI  10.54499 / 2023.17881.ICDT) and by FAPERJ through the JCNE grant no.~SEI-260003/020475/2025. Several of the main ideas in this paper were conceived when P.A., J.P.G.R.~and M.A.T.~participated in the research term ``Lattice Structures in Analysis and Applications" at ICMAT, Madrid, in May 2024.

\section{Preliminaries}\label{sec:preliminaries}

\subsection{Stable phase retrieval and the orthogonal reduction} Throughout the article, we will use the formulation of stable phase retrieval in Definition~\ref{DefinitionSPR}. A key tool for proving stable phase retrieval is the orthogonal reduction from \cite[Theorem~3.9]{FOTP} (see also~\cite[Theorem~2.1]{localphase} for a similar reduction for frames, which does not apply here). In \cite[Theorem~3.9]{FOTP} the reduction is stated in the general setting of Banach lattices. Here, we state the result in the simplified setting of $L^2(\mu)$. 


\begin{proposition}[Orthogonal reduction]\label{prop:orth-reduction}
Let $E\subset L^2(\mu;\mathbb{R})$ be a subspace. Then $E$
does stable phase retrieval if and only if there exists $c>0$ such that
\[
\norm{\abs{f}-\abs{g}}_{L^2}\ge c
\]
for every orthogonal pair $f,g\in E$ with $\norm{f}_{L^2}=\norm{g}_{L^2}=1$. 
\end{proposition}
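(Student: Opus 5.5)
We prove Proposition~\ref{prop:orth-reduction} by treating the two implications separately; the substance lies in the converse. Throughout, $\norm{\cdot}$ denotes the $L^2$ norm.

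\emph{Necessity.} Suppose $E$ does stable phase retrieval with constant $C$. Let $f,g\in E$ be orthogonal with $\norm f=\norm g=1$. Then
\[
\norm{f\mp g}^2=\norm f^2+\norm g^2=2,
\]
so $\min_\sigma\norm{f-\sigma g}=\sqrt2$. Stability gives $\norm{\abs f-\abs g}\ge \sqrt2/C$, so we may take $c=\sqrt2/C$.

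\emph{Sufficiency.} Fix $f,g\in E$ and set $u=f+g$ and $v=f-g$. Pointwise we have the identity
\[
\bigl|\abs f-\abs g\bigr| = \min(\abs u,\abs v),
\]
since $\bigl||a|-|b|\bigr|=\min(|a+b|,|a-b|)$ for real $a,b$. Also $\min_\sigma\norm{f-\sigma g}=\min(\norm u,\norm v)$. Hence it suffices to prove that there is $C$ with
\[
\min(\norm u,\norm v)\le C\,\norm{\min(\abs u,\abs v)}\qquad\text{for all } u,v\in E.
\]
This is the statement that "$E$ has no almost disjoint pairs" in the lattice sense. By homogeneity and symmetry we may assume $\norm u=1\ge\norm v=t>0$, so we must show $\norm{\min(\abs u,\abs v)}\gtrsim t$.

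The plan is to reduce to orthogonal pairs by a Gram--Schmidt type adjustment. Write $v=\alpha u+w$ with $w\perp u$, where $\alpha=\ip vu$.

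\textbf{Case $\abs\alpha$ small relative to $t$,} say $\abs\alpha\le t/2$. Then $\norm w\ge \tfrac{\sqrt3}{2}t$. We apply the hypothesis to the orthonormal pair $u$ and $w/\norm w$. Using the same pointwise identity in reverse, $\norm{\abs u-\abs{w}/\norm w}=\norm{\min(\abs{u+w'},\abs{u-w'})}$ with $w'=w/\norm w$, so $\norm{\min(\abs{u+w'},\abs{u-w'})}\ge c$. We then need to transfer this lower bound from the pair $(u,w')$ to the pair $(u,v)$. I would use that $\min(\abs a,\abs b)$ is $1$-Lipschitz in each argument and homogeneous, together with a two-dimensional rescaling argument inside $\Span(u,w)$, to show that $\norm{\min(\abs u,\abs v)}\ge c' t$.

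\textbf{Case $\abs\alpha>t/2$.} Here $u$ and $v$ are far from disjoint. The worry is that $\min(\abs u,\abs v)$ is still small if the mass of $u$ lies where $v$ is tiny. I would handle this by passing to a suitable combination in $\Span(u,v)$ that is orthogonal to one of them, and then applying the hypothesis to that new orthogonal pair.

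The main obstacle is making the two-dimensional transfer quantitative, since the quantity $\norm{\min(\abs{au+bw},\abs{a'u+b'w})}$ is not monotone in the coefficients. My first attempt would be to follow \cite[Theorem~3.9]{FOTP}, whose argument in Banach lattices proceeds by a compactness/contradiction step: take a sequence of normalized pairs that violate stability, and extract almost disjoint orthogonal pairs from them. I would reproduce that argument in the $L^2$ setting, where orthogonal projections make the extraction explicit.
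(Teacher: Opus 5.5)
\paragraph{There is a genuine gap.} Your necessity argument is correct; the paper writes out only sufficiency. The reformulation via $u=f+g$, $v=f-g$ and $\bigl|\abs{f}-\abs{g}\bigr|=\min(\abs{u},\abs{v})$ is a good start. However, the sufficiency direction, which is the entire content of the proposition, is not proved. Both cases end in ``I would'', you name the transfer as the main obstacle, and then you defer to reproducing an external argument.

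\begin{itemize}
\item The Case~1 sketch applies the hypothesis to the wrong pair. The orthonormal pair $(u,w')$ controls $\min(\abs{u+w'},\abs{u-w'})$, which is not a perturbation of $\min(\abs{u},\abs{v})$.
\item Even with the right pair, the threshold $\abs{\alpha}\le t/2$ is too coarse. A Lipschitz error of size $\abs{\alpha}\,\abs{u}$ can swamp any lower bound of order $ct$ when $c$ is small.
\item Case~2 contains no argument.
\item Citing \cite[Theorem~3.9]{FOTP}, as the paper does, only reduces to orthogonal pairs with $\norm{f}=1\ge\norm{g}$. The paper's actual proof is the normalization step that follows, and your proposal never addresses it. A violating sequence must have $\norm{g_n}\to1$, since $\norm{\abs{f_n}-\abs{g_n}}\ge\norm{f_n}-\norm{g_n}$. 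Then $\norm{\abs{f_n}-\abs{g_n}}\ge\norm{\abs{f_n}-\lambda_n\abs{g_n}}-\abs{\lambda_n-1}/\lambda_n$ with $\lambda_n=1/\norm{g_n}$.
\end{itemize}

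\paragraph{The missing idea.} Your pointwise identity, read backwards, produces orthogonal pairs as soon as the norms are equal.

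\begin{itemize}
\item Take $\norm{u}=1\ge\norm{v}=t>0$ and replace $u$ by $tu$. Pointwise, $\min(t\abs{u},\abs{v})\le\min(\abs{u},\abs{v})$.
\item Since $\norm{tu}=\norm{v}$, the vectors $f=(tu+v)/2$ and $g=(tu-v)/2$ in $E$ satisfy $\ip{f}{g}=(\norm{tu}^2-\norm{v}^2)/4=0$. They also satisfy $\bigl|\abs{f}-\abs{g}\bigr|=\min(t\abs{u},\abs{v})$ and $\norm{f}^2+\norm{g}^2=t^2$.
\item Replacing $v$ by $-v$ if needed swaps $f$ and $g$, so you may assume $\norm{f}\ge t/\sqrt2$.
\item Write $\norm{g}=s\norm{f}$ with $0<s\le1$; the case $g=0$ is trivial. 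The reverse triangle inequality, together with comparison against the orthonormal pair $f/\norm{f}$, $g/\norm{g}$, gives $\norm{\abs{f}-\abs{g}}\ge\norm{f}\max\bigl(1-s,\,c-(1-s)\bigr)\ge c\norm{f}/2$. This is exactly the paper's normalization step.
\item Hence $\norm{\min(\abs{u},\abs{v})}\ge ct/(2\sqrt2)$. This is stable phase retrieval with $C=2\sqrt2/c$, with no appeal to \cite{FOTP}.
\end{itemize}

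\paragraph{Rescuing your Gram--Schmidt split.} Use the threshold $\abs{\alpha}\le ct/4$ instead of $t/2$.
\begin{itemize}
\item Case~2 follows from $\abs{\alpha}\le\int\abs{uv}\,d\mu\le\norm{\min(\abs{u},\abs{v})}\,\norm{\max(\abs{u},\abs{v})}\le\sqrt2\,\norm{\min(\abs{u},\abs{v})}$.
\item In Case~1, apply the hypothesis to the orthonormal pair $(u\pm w')/\sqrt2$, which controls $\min(\abs{u},\abs{w'})$.
\end{itemize}
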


\begin{proof} The result from \cite[Theorem~3.9]{FOTP} shows that, in order to verify stable phase retrieval on a subspace, it suffices to verify it for orthogonal pairs $f$ and $g$ satisfying $\|f\|_{L^2}=1$ and $\|g\|_{L^2}\le 1$. Suppose then that the assertion of Proposition \ref{prop:orth-reduction} holds, but stable phase retrieval does not. Then, there exists a sequence of pairs $\{(f_n,g_n)\}_{n \in \N}$ satisfying $\|f_n\|_{L^2} = 1 \geq \|g_n\|_{L^2}$ and $f_n \perp g_n$ such that
\[
\limsup_{n \to \infty} \| |f_n| - |g_n| \|_{L^2} = 0.
\]
We first claim that $\|g_n\|_{L^2} \to 1.$ Indeed, if not, then passing to a subsequence, we would be able to estimate 
\[
\||f_n| - |g_n|\|_{L^2} \geq \|f_n\|_{L^2} - \|g_n\|_{L^2} \ge \delta
\]
for some fixed $\delta>0$. This contradicts the fact that the pair $(f_n,g_n)$ violates stable phase retrieval asymptotically. Next,  we note that
\begin{equation}\label{eq:comparison}
\||f_n| - |g_n|\|_{L^2} \geq \| |f_n| - \lambda_n |g_n|\|_{L^2} - \frac{|\lambda_n - 1|}{\lambda_n},
\end{equation}
where $\lambda_n := \frac{1}{\|g_n\|_{L^2}}.$ By hypothesis, the first summand on the right-hand side of \eqref{eq:comparison} is bounded from below by $c$, while the term $\frac{|\lambda_n - 1|}{\lambda_n}$ vanishes because $\|g_n\|_{L^2}$ converges to one. Hence, we again reach a contradiction, which shows that the reduction we were after is indeed valid. 
\end{proof}

\subsection{Test functions and \texorpdfstring{$L^1$}{L1} bounds}
An important ingredient in our proof is the existence of appropriate test functions, which we will use to extract coefficients from our random variables. We will refer to these test functions as ``probes", as they will allow us to indirectly observe various structural properties of our random variables.
\begin{lemma}\label{lem:probes}
Let $\xi$ be a real-valued random variable with $\E[\xi]=0$ and $\E[\xi^2]=1$.

\begin{enumerate}[label=\rm(\arabic*)]
\item If $\E[\abs{\xi}]\ge A$ for some constant $A>0$, then there exists a bounded measurable
function $S$ such that
\[
\E[S]=0,
\qquad
\E[\xi S]=1,
\qquad
\norm{S}_{L^\infty}\le \frac{2}{A}.
\]
\item If $\E[\abs{\xi}]\le B$ for some constant $B<1$, then there exists a bounded measurable
function $T$ such that
\[
\E[T]=0,
\qquad
\E[(\xi^2-1)T]=1,
\qquad
\norm{T}_{L^\infty}\le \frac{2}{1-B}.
\]
\end{enumerate}
\end{lemma}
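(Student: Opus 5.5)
Both probes will be centered, rescaled indicator-type functions of $\xi$. The only real input is a lower bound on the relevant correlation, coming from the assumed $L^1$ bound. The key observation is that centering a bounded function does not change its correlation with a mean-zero variable. Indeed, $\E[\xi]=0$, and $\E[\xi^2-1]=0$ by the normalization. So for any bounded $U$ and $V$,
\[
\E\bigl[\xi\,(U-\E[U])\bigr]=\E[\xi U],
\qquad
\E\bigl[(\xi^2-1)(V-\E[V])\bigr]=\E[(\xi^2-1)V].
\]
It therefore suffices to find a bounded $U$ (resp.\ $V$) with $\norm{U}_{L^\infty}\le 1$ whose correlation with $\xi$ (resp.\ with $\xi^2-1$) is bounded below. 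We then center and divide by that correlation.

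For (1), I take $U=\sgn(\xi)$, so that $\E[\xi U]=\E[\abs{\xi}]\ge A$. Set
\[
S:=\frac{\sgn(\xi)-\E[\sgn(\xi)]}{\E[\abs{\xi}]}.
\]
Then $\E[S]=0$ and $\E[\xi S]=1$. Since $\abs{\sgn(\xi)-\E[\sgn(\xi)]}\le 2$, we get $\norm{S}_{L^\infty}\le 2/\E[\abs{\xi}]\le 2/A$.

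For (2), I take $V=\mathbf 1_{\{\abs{\xi}\ge1\}}$, so that $\E[(\xi^2-1)V]=\E[(\xi^2-1)_+]$. The main step is the lower bound $\E[(\xi^2-1)_+]\ge 1-B$. Since $\E[\xi^2-1]=0$, we have $\E[(\xi^2-1)_+]=\E[(1-\xi^2)_+]$. On the event $\{\abs{\xi}<1\}$,
\[
1-\xi^2=(1-\abs{\xi})(1+\abs{\xi})\ge 1-\abs{\xi}.
\]
Hence
\[
\E[(1-\xi^2)_+]\ge \E[(1-\abs{\xi})_+]\ge \E[1-\abs{\xi}]=1-\E[\abs{\xi}]\ge 1-B>0.
\]
Define
\[
T:=\frac{\mathbf 1_{\{\abs{\xi}\ge1\}}-\Prob(\abs{\xi}\ge1)}{\E[(\xi^2-1)_+]}.
\]
Then $\E[T]=0$ and $\E[(\xi^2-1)T]=1$. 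Since the numerator lies in $[-1,1]$, we get $\norm{T}_{L^\infty}\le 1/(1-B)\le 2/(1-B)$.

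The only step needing any thought is the lower bound on $\E[(\xi^2-1)_+]$. It rests on trading the positive part of $\xi^2-1$ for the negative part, which is possible because $\E[\xi^2]=1$, and then comparing $1-\xi^2$ with $1-\abs{\xi}$ on $\{\abs{\xi}<1\}$. Everything else is a direct verification.
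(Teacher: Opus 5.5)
Your proof is correct and follows essentially the paper's route. Part (1) is identical: you center $\sgn(\xi)$ and normalize by $\E[\abs{\xi}]$. Part (2) uses the same center-and-normalize scheme, with the one-sided indicator $\mathbf 1_{\{\abs{\xi}\ge1\}}$ in place of the paper's $\sgn(\xi^2-1)$. The one substantive difference is that you use $\E[\xi^2-1]=0$ to show $\E[(\xi^2-1)_+]=\E[(1-\xi^2)_+]\ge 1-B$. The paper only bounds $\E[\abs{\xi^2-1}]\ge 1-B$. This is why you get the sharper bound $\norm{T}_{L^\infty}\le 1/(1-B)$ rather than $2/(1-B)$.
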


\begin{proof}
We first construct the function with the properties described in part (1).  Define
\[
S_0:=\sgn(\xi),
\]
with the convention $\sgn(0)=0$.  Note that $S_0$ is measurable and satisfies
$|S_0|\le 1$ almost surely.  Moreover,
\[
\xi S_0=\abs{\xi}
\qquad\text{almost surely,}
\]
so
\[
\E[\xi S_0]=\E[\abs{\xi}]\ge A>0.
\]
Using this property, we may normalize and set
\[
S:=\frac{S_0-\E[S_0]}{\E[\xi S_0]}.
\]
The function $S$ is measurable and bounded. In addition, we have
\[
\E[S]=\frac{\E[S_0]-\E[S_0]}{\E[\xi S_0]}=0,
\]
and, using the fact that $\E[\xi]=0$, we also have
\[
\E[\xi S]
=\frac{\E[\xi S_0]-\E[S_0]\E[\xi]}{\E[\xi S_0]}
=\frac{\E[\xi S_0]}{\E[\xi S_0]}
=1.
\]
Finally, since $|S_0|\le1$ and $|\E[S_0]|\le \E|S_0|\le1$, it follows that
\[
|S_0-\E[S_0]|\le 2
\qquad\text{almost surely.}
\]
Therefore,
\[
\norm{S}_{L^\infty}
\le \frac{2}{\E[\xi S_0]}
=\frac{2}{\E[\abs{\xi}]}
\le \frac{2}{A}.
\]
This proves part~\rm(1).

To construct the function $T$ in statement (2), we apply the same idea to the centered random variable
\[
\eta:=\xi^2-1.
\]
Since $\E[\xi^2]=1$, we have $\E[\eta]=0$.  We now show that $\eta$ has
 $L^1$ mass bounded away from zero.  Since $\xi^2=\abs{\xi}^2$, we may compute that
\[
\abs{\eta}
=\abs{\abs{\xi}^2-1}
=\abs{\abs{\xi}-1}(\abs{\xi}+1)
\ge \abs{\abs{\xi}-1}.
\]
Taking expectations and using the reverse triangle inequality, we obtain
\[
\E[\abs{\eta}]
\ge \E[\abs{\abs{\xi}-1}]
\ge \bigl|\E[\abs{\xi}]-1\bigr|
=1-\E[\abs{\xi}]
\ge 1-B.
\]
Now, we set
\[
T_0:=\sgn(\eta)
\]
and define
\[
T:=\frac{T_0-\E[T_0]}{\E[\eta T_0]}
=\frac{T_0-\E[T_0]}{\E[\abs{\eta}]}.
\]
Exactly as in part (1), we have that $T$ is measurable, bounded and satisfies
\[
\E[T]=0,
\qquad
\E[\eta T]=1.
\]
Since $\eta=\xi^2-1$, we have
\[
\E[(\xi^2-1)T]=1.
\]
The $L^\infty$ bound 
\[
\norm{T}_{L^\infty}
\le \frac{2}{\E[\abs{\eta}]}
\le \frac{2}{1-B}
\]
is straightforward, so the proof is complete.
\end{proof}

\begin{remark}\label{rem:probe-extraction}
If $\xi_1,\dots,\xi_n$ are independent centered random variables and
$X_c=\sum_i c_i\xi_i$, then we can use the test functions constructed above to eliminate every
coordinate except the one being tested.  More precisely,
\[
\E[X_c\,S_i]=c_i,
\qquad
\E[X_c^2\,T_i]=c_i^2,
\qquad
\E[X_c^2\,S_iS_j]=2c_ic_j
\quad(i\ne j),
\]
whenever $S_i,S_j,T_i$ are the corresponding probes for $\xi_i,\xi_j$.
\end{remark}

The next basic estimate  will allow us to transfer the lower bound for the $L^1$ norms of the random variables to their entire span (with a slightly different constant). More accurately, we prove the following result.

\begin{lemma}[$L^1$ lower bound for independent sums]\label{lem:l1lower}
Let $\zeta_1,\dots,\zeta_N$ be independent, centered, real-valued random variables and
assume that
\[
\norm{\zeta_i}_{L^1}\ge \delta \norm{\zeta_i}_{L^2}>0
\qquad(1\le i\le N).
\]
Then for every scalar family $(c_i)_{i=1}^N$, we have
\[
\norm{\sum_{i=1}^N c_i\zeta_i}_{L^1}
\ge
\frac{\delta}{2\sqrt2}
\Bigl(\sum_{i=1}^N c_i^2\norm{\zeta_i}_{L^2}^2\Bigr)^{1/2}.
\]
\end{lemma}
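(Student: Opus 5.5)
Absorbing the scalars, set $Z_i:=c_i\zeta_i$. These are still independent and centered, and $\norm{Z_i}_{L^1}\ge\delta\norm{Z_i}_{L^2}$. The target is then
\[
\E\Bigl|\sum_i Z_i\Bigr|\ge \frac{\delta}{2\sqrt2}\Bigl(\sum_i \norm{Z_i}_{L^2}^2\Bigr)^{1/2}.
\]
The plan is a two-step chain: symmetrization, followed by a Khintchine-type lower bound for Rademacher sums with $L^1$ coefficients, converted back to $L^2$ through the hypothesis.

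\textbf{Step 1 (symmetrization).} Let $(Z_i')$ be an independent copy of $(Z_i)$. Since the $Z_i'$ are centered, Jensen's inequality (conditioning on $(Z_i)$) gives $\E|\sum Z_i|\ge\frac12\E|\sum (Z_i-Z_i')|$. The variables $W_i:=Z_i-Z_i'$ are symmetric and independent. Hence, for independent Rademacher signs $(\varepsilon_i)$, the sum $\sum W_i$ has the same law as $\sum\varepsilon_i|W_i|$.

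\textbf{Step 2 (Khintchine in $L^1$).} Conditioning on $(W_i)$, apply the Szarek bound $\E_\varepsilon|\sum\varepsilon_i a_i|\ge\frac1{\sqrt2}(\sum a_i^2)^{1/2}$. Then apply Minkowski's inequality in $\ell^2$ under the outer expectation, i.e.\ $\E(\sum a_i^2)^{1/2}\ge(\sum(\E a_i)^2)^{1/2}$. Together these give
\[
\E\Bigl|\sum W_i\Bigr|\ge\frac1{\sqrt2}\Bigl(\sum_i(\E|W_i|)^2\Bigr)^{1/2}.
\]

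\textbf{Step 3 (returning to $Z_i$).} We need $\E|Z_i-Z_i'|\ge\E|Z_i|$, which again follows from Jensen's inequality, conditioning on $Z_i$ and using $\E Z_i'=0$. The hypothesis then gives $\E|Z_i|\ge\delta\norm{Z_i}_{L^2}$. Chaining the three steps yields the constant $\frac12\cdot\frac1{\sqrt2}\cdot\delta=\frac{\delta}{2\sqrt2}$, exactly as stated.

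The main obstacle is Step 2, specifically the sharp constant $1/\sqrt2$ in Khintchine's $L^1$ inequality (Szarek's theorem). If we prefer not to cite it, the fallback is the elementary Paley--Zygmund/Hölder route $\E|S|\ge(\E S^2)^{3/2}/(\E S^4)^{1/2}$ together with $\E S^4\le3(\E S^2)^2$. This yields only $1/\sqrt3$ and so a weaker constant $\delta/(2\sqrt3)$. Such a constant would suffice for all later uses, but matching the stated constant requires citing Szarek's bound.
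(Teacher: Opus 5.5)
Your argument is essentially the paper's: symmetrize, apply Khintchine with Szarek's constant $1/\sqrt2$ conditionally on the magnitudes, use a convexity step to bring the expectation inside, and desymmetrize coordinatewise by Jensen. The one difference is in the convexity step. You use Minkowski, $\E\norm{v}_2\ge\norm{\E v}_2$, directly on the unnormalized $Z_i$. The paper instead first normalizes to $\norm{\zeta_i}_{L^2}=1$ and $\sum c_i^2=1$, then uses a pointwise Cauchy--Schwarz inequality. Your version spares the normalization, and both give exactly $\delta/(2\sqrt2)$.

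One correction is needed. In Step 1 the justification runs the wrong way. Jensen conditioned on $(Z_i)$, together with $\E Z_i'=0$, yields $\E\bigl|\sum_i(Z_i-Z_i')\bigr|\ge\E\bigl|\sum_i Z_i\bigr|$, which is the opposite of what you need. The inequality $\E\bigl|\sum_i Z_i\bigr|\ge\frac12\E\bigl|\sum_i(Z_i-Z_i')\bigr|$ instead follows, as in the paper, from $|S-S'|\le|S|+|S'|$ and the fact that $S$ and $S'$ have the same law; no centering is required. Your Jensen argument in the correct direction belongs in Step 3, where you do use it properly.
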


\begin{proof}
The statement is homogeneous in the coefficient vector, so we may first
normalize and assume that
\[
\sum_{i=1}^N c_i^2\norm{\zeta_i}_{L^2}^2=1.
\]
We next reduce to the unit-variance case.  For each $i$, set
\[
\widetilde\zeta_i:=\frac{\zeta_i}{\norm{\zeta_i}_{L^2}},
\qquad
\widetilde c_i:=c_i\norm{\zeta_i}_{L^2}.
\]
The variables $\widetilde\zeta_i$ are still independent and centered. Moreover, they satisfy $\norm{\widetilde\zeta_i}_{L^2}=1$ and the lower $L^1$ bound
becomes
\[
\norm{\widetilde\zeta_i}_{L^1}
=\frac{\norm{\zeta_i}_{L^1}}{\norm{\zeta_i}_{L^2}}
\ge \delta.
\]
We also have
\[
\sum_{i=1}^N \widetilde c_i \widetilde\zeta_i
=\sum_{i=1}^N c_i\zeta_i,
\qquad
\sum_{i=1}^N \widetilde c_i^2
=1.
\]
Thus, it is enough to prove the lemma under the additional assumptions
\[
\norm{\zeta_i}_{L^2}=1
\qquad(1\le i\le N),
\qquad
\sum_{i=1}^N c_i^2=1.
\]
Define
\[
S:=\sum_{i=1}^N c_i\zeta_i
\]
and let
\[
S':=\sum_{i=1}^N c_i\zeta_i'
\]
be an independent copy of $S$, obtained from an independent copy
$(\zeta_i')_{i=1}^N$ of the family $(\zeta_i)_{i=1}^N$.
By the triangle inequality, we have
\[
\abs{S-S'}\le \abs{S}+\abs{S'}.
\]
Taking expectations and using that $S$ and $S'$ have the same distribution, we
obtain
\[
\norm{S-S'}_{L^1}\le 2\norm{S}_{L^1}.
\]
Now, the random variables $\zeta_i-\zeta_i'$ are independent and symmetric.
Conditioning on their absolute values and applying the Khintchine inequality
gives
\[
\norm{S-S'}_{L^1}
=
\Bigl\|\sum_{i=1}^N c_i(\zeta_i-\zeta_i')\Bigr\|_{L^1}
\ge
\frac{1}{\sqrt2}\,
\E\Bigl[\Bigl(\sum_{i=1}^N c_i^2(\zeta_i-\zeta_i')^2\Bigr)^{1/2}\Bigr].
\]
Since the nonnegative numbers $(c_i^2)_{i=1}^N$ sum to one, Cauchy--Schwarz
implies the pointwise estimate
\[
\sum_{i=1}^N c_i^2\abs{\zeta_i-\zeta_i'}
\le
\Bigl(\sum_{i=1}^N c_i^2\Bigr)^{1/2}
\Bigl(\sum_{i=1}^N c_i^2(\zeta_i-\zeta_i')^2\Bigr)^{1/2}
=
\Bigl(\sum_{i=1}^N c_i^2(\zeta_i-\zeta_i')^2\Bigr)^{1/2}.
\]
Taking expectations and combining with the previous inequality, we arrive at
\[
\norm{S-S'}_{L^1}
\ge
\frac{1}{\sqrt2}\sum_{i=1}^N c_i^2\norm{\zeta_i-\zeta_i'}_{L^1}.
\]
Hence,
\[
\norm{S}_{L^1}
\ge
\frac{1}{2\sqrt2}\sum_{i=1}^N c_i^2\norm{\zeta_i-\zeta_i'}_{L^1}.
\]
It remains to bound each $\norm{\zeta_i-\zeta_i'}_{L^1}$ from below.  Since
$\zeta_i'$ is independent of $\zeta_i$ and centered, we have
\[
\E[\zeta_i-\zeta_i' \mid \zeta_i]
=\zeta_i.
\]
Applying Jensen's inequality to the conditional expectation gives
\[
\norm{\zeta_i-\zeta_i'}_{L^1}
\ge
\norm{\E[\zeta_i-\zeta_i' \mid \zeta_i]}_{L^1}
=\norm{\zeta_i}_{L^1}
\ge \delta.
\]
Substituting this into the previous bound yields
\[
\norm{S}_{L^1}
\ge
\frac{\delta}{2\sqrt2}\sum_{i=1}^N c_i^2
=\frac{\delta}{2\sqrt2}.
\]
Since this is exactly the normalized form of the desired estimate, the proof is
complete.
\end{proof}

\subsection{Triangular arrays and infinitely divisible limits}
A large component of our proof focuses on understanding the weak limits of the random variables we are analyzing. The next lemma is a precise statement of this type.

\begin{lemma}\label{lem:tail-rigidity}
Let $(\xi_{n,i})_{n\ge1,\ i\ge2}$ be a triangular array such that for each
fixed $n$ the family $(\xi_{n,i})_{i\ge2}$ is independent, centered,
$L^2$-normalized and satisfies
\[
\norm{\xi_{n,i}}_{L^1}\ge \delta
\qquad(i\ge2).
\]
Let $c^{(n)}\in \ell^2(\N)$ satisfy
\[
\sup_n \norm{c^{(n)}}_2<\infty,
\qquad
c_1^{(n)}=0,
\qquad
\max_{i\ge2}\abs{c_i^{(n)}}\to0.
\]
Set
\[
Z_n:=\sum_{i\ge2} c_i^{(n)}\xi_{n,i}.
\]
If $Z_n\to0$ in distribution, then $\norm{c^{(n)}}_2\to0$.
\end{lemma}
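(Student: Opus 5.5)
The plan is to argue directly from the $L^1$ lower bound of Lemma~\ref{lem:l1lower} together with uniform integrability. The hypothesis $\max_{i\ge2}\abs{c_i^{(n)}}\to0$ will not actually be needed.

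First I check that $Z_n$ is well defined and controlled in $L^2$. For each fixed $n$ the variables $(\xi_{n,i})_{i\ge2}$ are independent, centered and $L^2$-normalized, hence orthonormal. The series $\sum_{i\ge2}c_i^{(n)}\xi_{n,i}$ therefore converges in $L^2$, and
\[
\norm{Z_n}_{L^2}=\norm{c^{(n)}}_2\le M:=\sup_m\norm{c^{(m)}}_2<\infty .
\]

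Next I extend Lemma~\ref{lem:l1lower} to the infinite sum. Apply it to the finite family $(\xi_{n,i})_{2\le i\le N}$, for which $\norm{\xi_{n,i}}_{L^1}\ge\delta=\delta\norm{\xi_{n,i}}_{L^2}$. This gives
\[
\Bigl\|\sum_{i=2}^N c_i^{(n)}\xi_{n,i}\Bigr\|_{L^1}\ge\frac{\delta}{2\sqrt2}\Bigl(\sum_{i=2}^N (c_i^{(n)})^2\Bigr)^{1/2}.
\]
Letting $N\to\infty$, the partial sums converge to $Z_n$ in $L^2$, hence in $L^1$. We obtain
\[
\norm{Z_n}_{L^1}\ge \frac{\delta}{2\sqrt2}\norm{c^{(n)}}_2 ,
\]
using $c_1^{(n)}=0$.

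Finally I show $\norm{Z_n}_{L^1}\to0$. The family $(\abs{Z_n})_n$ is uniformly integrable, since $\sup_n\E[Z_n^2]\le M^2$ gives
\[
\E\bigl[\abs{Z_n}\mathbf 1_{\{\abs{Z_n}>K\}}\bigr]\le M^2/K .
\]
Also $Z_n\to0$ in distribution, so $\abs{Z_n}\to0$ in distribution by the continuous mapping theorem. Convergence in distribution plus uniform integrability gives convergence of means, so $\E\abs{Z_n}\to0$. Concretely, $\E\abs{Z_n}\le \E\min(\abs{Z_n},K)+M^2/K$. The first term tends to $0$ because $x\mapsto\min(\abs{x},K)$ is bounded and continuous, and then we let $K\to\infty$. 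Combined with the previous display, this yields $\norm{c^{(n)}}_2\le \frac{2\sqrt2}{\delta}\norm{Z_n}_{L^1}\to0$.

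The only step requiring any care is the uniform-integrability passage from convergence in distribution to convergence in $L^1$. Here the uniform $L^2$ bound coming from $\sup_n\norm{c^{(n)}}_2<\infty$ makes this step routine.
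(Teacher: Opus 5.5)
Your proposal is correct and takes essentially the same approach as the paper's proof: the uniform $L^2$ bound gives uniform integrability, so $\norm{Z_n}_{L^1}\to0$, and Lemma~\ref{lem:l1lower} converts this into $\norm{c^{(n)}}_2\to0$. Your passage from finite partial sums to the infinite series in Lemma~\ref{lem:l1lower}, and your remark that the hypothesis $\max_{i\ge2}\abs{c_i^{(n)}}\to0$ is never used, are both accurate and fill in details the paper leaves implicit.
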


\begin{proof}
Since the limiting law is the Dirac mass at $0$, the assumption
$Z_n\to0$ in distribution implies that
\[
Z_n\to0
\qquad\text{in probability.}
\]
To pass from convergence in probability to convergence of first moments, we use
uniform integrability.

For each $n$, the variables $(\xi_{n,i})_{i\ge2}$ are independent, centered
and $L^2$-normalized.  Therefore, they form an orthonormal family in $L^2$, and
the random sum $Z_n$ satisfies
\[
\norm{Z_n}_{L^2}^2
=\E\Bigl[\Bigl(\sum_{i\ge2} c_i^{(n)}\xi_{n,i}\Bigr)^2\Bigr]
=\sum_{i\ge2}(c_i^{(n)})^2\E[\xi_{n,i}^2]
=\sum_{i\ge2}(c_i^{(n)})^2
=\norm{c^{(n)}}_2^2.
\]
Since $\sup_n \norm{c^{(n)}}_2<\infty$, the sequence $(Z_n)_n$ is bounded in
$L^2$.  Any $L^2$-bounded family is uniformly integrable in $L^1$, so
$(|Z_n|)_n$ is uniformly integrable.  Since $Z_n\to0$ in probability, we may
therefore conclude that
\[
\norm{Z_n}_{L^1}\longrightarrow0.
\]
We now apply Lemma~\ref{lem:l1lower} to each row of the array.  The assumptions of
that lemma are satisfied because every $\xi_{n,i}$ is centered,
$L^2$-normalized and obeys the bound $\norm{\xi_{n,i}}_{L^1}\ge\delta$.  We conclude that
\[
\norm{Z_n}_{L^1}
=\Bigl\|\sum_{i\ge2} c_i^{(n)}\xi_{n,i}\Bigr\|_{L^1}
\ge
\frac{\delta}{2\sqrt2}
\Bigl(\sum_{i\ge2}(c_i^{(n)})^2\Bigr)^{1/2}
=\frac{\delta}{2\sqrt2}\norm{c^{(n)}}_2.
\]
The left-hand side tends to zero, so the right-hand side must also tend to
zero.  Therefore,
\[
\norm{c^{(n)}}_2\longrightarrow0,
\]
which is the desired conclusion.
\end{proof}
In order to state our next result, we recall that an $\R^n$-valued random variable $S$ is \emph{infinitely divisible} if, for each positive integer $m$, there exist i.i.d.~random variables $\eta_{m,1},\eta_{m,2},\dots,\eta_{m,m}$ such that 
\[
S \overset{d}{\sim} \eta_{m,1} + \cdots + \eta_{m,m}. 
\]
Infinitely divisible distributions are a fundamental object in probability theory due to the classical limit theorem of triangular arrays (see, for instance, \cite[Chapter~IV]{gnedenko-kolmogorov}).
Our next preliminary result shows that the weak limit of diffuse random vectors is an infinitely divisible random variable. 

\begin{lemma}[Diffuse limits are infinitely divisible]\label{lem:diffuse-id-limit}
Let $d\ge1$ and let $(\eta_{n,i})_{n\ge1,\ i\ge1}$ be a triangular array of
$\R^d$-valued random variables.  Assume that, for each fixed $n$, the variables
$(\eta_{n,i})_{i\ge1}$ are independent and that the row sum
\[
S_n:=\sum_{i\ge1} \eta_{n,i}
\]
is well-defined as a limit in probability of its finite partial sums.  Assume
also that the array is infinitesimal, in the sense that, for every
$\eps>0$,
\[
\sup_{i\ge1}\Prob\{\norm{\eta_{n,i}}>\eps\}\longrightarrow0.
\]
If $S_n$ converges in distribution to an $\R^d$-valued random variable $S$,
then the law of $S$ is infinitely divisible.
\end{lemma}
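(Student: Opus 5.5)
Since the law of $S$ is the weak limit of sums of independent infinitesimal vectors, the plan is to show directly that for every $m\ge1$ it is the law of a sum of $m$ i.i.d.\ vectors. We do this by splitting each row into $m$ blocks of comparable ``size'' and using tightness. If that becomes too technical, we fall back on the classical theorem (Khinchin; see \cite[Chapter~IV]{gnedenko-kolmogorov}) that limits of row sums of infinitesimal independent arrays are infinitely divisible. Its usual statement is for finite rows, so the first step is reducing to finite rows. For each $n$, pick $N_n$ with $\Prob\{\norm{S_n-\sum_{i\le N_n}\eta_{n,i}}>1/n\}<1/n$. This is possible because the partial sums converge in probability. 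Then the truncated sums $S_n':=\sum_{i\le N_n}\eta_{n,i}$ still converge in distribution to $S$, by Slutsky. The truncated array $(\eta_{n,i})_{i\le N_n}$ remains infinitesimal, so the classical theorem applies and concludes the proof. We also sketch the self-contained route below.

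For the direct argument, fix $m$. We use characteristic functions $\phi_{n,i}(t)=\E e^{i\ip{t}{\eta_{n,i}}}$. Infinitesimality gives $\sup_{i}\abs{1-\phi_{n,i}(t)}\to0$ uniformly for $t$ in compacts. The goal is to partition $\{1,\dots,N_n\}$ into $m$ consecutive blocks $B_1,\dots,B_m$ such that the block sums $U_{n,k}=\sum_{i\in B_k}\eta_{n,i}$ are independent and have asymptotically the same law. To balance the blocks, measure each one by the functional $\sum_{i\in B_k}\E\min(1,\norm{\eta_{n,i}-a_{n,i}}^2)$, where $a_{n,i}$ are truncated means. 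The proof would split the mass equally, up to an error $\max_i$ of a single term, which is $o(1)$ by infinitesimality.

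One then shows that each sequence $(U_{n,k})_n$ is tight. This follows because $S_n'$ is tight and the summands are independent: a symmetrization argument with Lévy's inequality shows that any subsum of a tight independent sum is tight after centering. Passing to a subsequence, $(U_{n,1},\dots,U_{n,m})$ converges jointly to independent limits $V_1,\dots,V_m$, and $S\overset{d}{\sim}V_1+\cdots+V_m$.

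\textbf{Main obstacle.} The hard step is showing that the $V_k$ are identically distributed. Equal ``mass'' in the functional above does not by itself force equal limit laws. The fix is the Lévy--Khintchine accompanying-law step. By infinitesimality, $\log\phi_{U_{n,k}}(t)=\sum_{i\in B_k}(\phi_{n,i}(t)-1)+o(1)$, so each $U_{n,k}$ is asymptotically compound Poisson. Compound Poisson laws are themselves infinitely divisible, so the limits $V_k$ are infinitely divisible, and hence so is their sum $S$.

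This shows that the balancing step can be dropped entirely. The cleanest final plan is: truncate rows, approximate by accompanying compound Poisson laws $\exp\bigl(\sum_i(\phi_{n,i}-1)\bigr)$, and observe that weak limits of infinitely divisible laws are infinitely divisible. The delicate point is justifying the estimate $\sum_i\abs{\phi_{n,i}-1}^2\to0$, which uses the truncated-mean centering and tightness.
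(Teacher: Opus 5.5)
Your first paragraph is exactly the paper's proof. You truncate each row at an index $N_n$ with $\Prob\{\norm{S_n-\sum_{i\le N_n}\eta_{n,i}}>1/n\}<1/n$, use Slutsky to keep the limit $S$, note that infinitesimality passes to the finite array, and invoke the classical Kolmogorov--Khintchine theorem for null arrays. The self-contained accompanying-law sketch is not needed, and as you note yourself, the expansion $\log\phi_{U_{n,k}}=\sum_{i}(\phi_{n,i}-1)+o(1)$ is only valid after centering by truncated means. The citation-based argument is therefore the one to keep.
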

\begin{proof}
The version with finitely many summands in each rows is classical. More precisely, if 
$U_{n,1},\dots,U_{n,k_n}$ are independent $\R^d$-valued random variables,
\[
\max_{1\le j\le k_n}\Prob\{\norm{U_{n,j}}>\eps\}\longrightarrow0
\qquad(\eps>0),
\]
and
\[
\sum_{j=1}^{k_n}U_{n,j}\Rightarrow U,
\]
then the law of $U$ is infinitely divisible.  This is the celebrated
Kolmogorov--Khintchine theorem for null arrays
\cite[Theorem~9.3]{sato-levy}.  In the one-dimensional case, it is also the
classical theorem of Gnedenko--Kolmogorov, as can be found, for example,  in
\cite[Chapter~IV, Section~24]{gnedenko-kolmogorov}.

It remains only to reduce the present formulation with countably many summands per row to the
finite case.  Since $S_n$ is the limit in probability of its finite
partial sums, for each $n$ we may choose an integer $k_n$ such that
\[
\Prob\left\{
\norm{S_n-\sum_{i=1}^{k_n}\eta_{n,i}}>\frac1n
\right\}\le \frac1n.
\]
Set
\[
S_n^{(k)}:=\sum_{i=1}^{k_n}\eta_{n,i}.
\]
Then $S_n^{(k)}-S_n\to0$ in probability and  hence, by Slutsky's theorem,
\[
S_n^{(k)} = (S_n^{(k)}-S_n)+S_n\Rightarrow S.
\]
For each fixed $n$, the summands $\eta_{n,1},\dots,\eta_{n,k_n}$ are independent,
and the finite array remains infinitesimal because
\[
\max_{1\le i\le k_n}\Prob\{\norm{\eta_{n,i}}>\eps\}
\le
\sup_{i\ge1}\Prob\{\norm{\eta_{n,i}}>\eps\}
\longrightarrow0.
\]
Applying the classical theorem, we conclude that the law of $S$ is infinitely
divisible.
\end{proof}

\section{The compactness proof}\label{sec:compactness-proof}

This section proves Theorem~\ref{thm:main} through the first method mentioned in the introduction. It consists of an argument by contradiction that focuses on extracting limit properties of sequences failing stable phase retrieval. 

\subsection{Profile decomposition} We start with the basic setup that can be deduced from the failure of stable phase retrieval. 

\begin{proposition}\label{prop:counterexample}
If the conclusion of Theorem~\ref{thm:main} is false, then there exists
\begin{enumerate}[label=\rm(\roman*)]
\item a triangular array $(\xi_{n,i})_{n\ge1,\ i\ge2}$ such that for each
fixed $n$ the family $(\eta,\xi_{n,i})_{i\ge2}$ is independent and every
$\xi_{n,i}$ satisfies
\[
\norm{\xi_{n,i}}_{L^2}=1,
\qquad
\E[\xi_{n,i}]=0,
\qquad
\delta\le \norm{\xi_{n,i}}_{L^1}\le 1-\delta;
\]
\item coefficient vectors $a^{(n)},b^{(n)}\in \ell^2(\N)$ with
\[
\norm{a^{(n)}}_2=\norm{b^{(n)}}_2=1,
\qquad
\ip{a^{(n)}}{b^{(n)}}=0,
\]
such that, writing
\[
X_n:=a_1^{(n)}\eta+\sum_{i\ge2} a_i^{(n)}\xi_{n,i},
\qquad
Y_n:=b_1^{(n)}\eta+\sum_{i\ge2} b_i^{(n)}\xi_{n,i},
\]
we have
\[
\norm{\abs{X_n}-\abs{Y_n}}_{L^2}\longrightarrow 0.
\]
\end{enumerate}
\end{proposition}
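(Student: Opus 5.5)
The plan is to negate the conclusion of Theorem~\ref{thm:main}, pass to orthogonal pairs via Proposition~\ref{prop:orth-reduction}, and turn each bad pair into a row of the array. Suppose the theorem fails for some $\eta$, $\delta$. The point to keep in mind is that the constant $C_{\delta,\eta}$ is allowed to depend only on $\delta$ and $\eta$. So failure means: for every $n$ there is a sequence $(\xi_{n,i})_{i\ge2}$, with $\eta,\xi_{n,2},\xi_{n,3},\dots$ independent and each $\xi_{n,i}$ satisfying the normalization and $L^1$ bounds, whose closed span $E_n$ violates the stability inequality with constant $n$. I will allow these to live on different probability spaces; if necessary, I will realize everything on a common product space carrying a copy of $\eta$, which does not change any of the laws involved. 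This choice of sequence for each $n$ is exactly the triangular array in (i).

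Next, I fix $n$ and use that $E_n$ does not do stable phase retrieval with constant $n$. I apply the contrapositive of Proposition~\ref{prop:orth-reduction} in quantitative form. Concretely, I will rerun its proof, which goes through \cite[Theorem~3.9]{FOTP}, keeping track of constants. The reduction constant in \cite{FOTP} is universal, and the normalization step in the proof above only costs a term $|\lambda-1|/\lambda$ that can be absorbed. This gives an orthogonal pair $X_n,Y_n\in E_n$ with $\norm{X_n}_{L^2}=\norm{Y_n}_{L^2}=1$ and $\norm{\abs{X_n}-\abs{Y_n}}_{L^2}\le \eps_n$, where $\eps_n\to0$. If tracking constants becomes awkward, there is a fallback: since failure at constant $n$ in particular means $E_n$ fails stable phase retrieval with constant $n$, for each $n$ I can pick an orthogonal unit pair whose modulus distance is below $1/n$. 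To justify this, I will prove by contradiction that a lower bound $c$ on orthogonal unit pairs forces stability with a constant $K(c)$ depending only on $c$. This uniformity is the main obstacle, and it is where the universality of the \cite{FOTP} constant is used.

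Finally, I expand $X_n$ and $Y_n$ in the orthonormal system $(\eta,\xi_{n,i})_{i\ge2}$. This system is orthonormal because the variables are independent, centered and $L^2$-normalized. The expansion defines $a^{(n)},b^{(n)}\in\ell^2(\N)$ with $a^{(n)}_1$ the coefficient of $\eta$. Parseval then gives $\norm{a^{(n)}}_2=\norm{X_n}_{L^2}=1$ and likewise $\norm{b^{(n)}}_2=1$. It also gives $\ip{a^{(n)}}{b^{(n)}}=\E[X_nY_n]=0$. The series defining $X_n$ and $Y_n$ converge in $L^2$, so the displayed formulas hold, and $\norm{\abs{X_n}-\abs{Y_n}}_{L^2}\to0$ by construction.
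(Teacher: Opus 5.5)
Your argument is correct, but it negates a stronger statement than the paper does, and it needs one ingredient the paper does not supply.

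The paper reads ``the conclusion is false'' for one fixed sequence $(\eta,\xi_i)$. Proposition~\ref{prop:orth-reduction}, applied to that single span, then gives orthonormal pairs with $\norm{\abs{X_n}-\abs{Y_n}}_{L^2}\le 1/n$. In that reading $\xi_{n,i}$ is just $\xi_i$ renamed, so that the row-by-row permutations of Lemma~\ref{lem:profile} make sense.

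You instead negate the uniform statement, with the constant depending only on $\delta,\eta$, so your rows are genuinely different sequences. This subsumes the paper's reading (take all rows equal). It is also what would let the rest of Section~\ref{sec:compactness-proof}, which is already written for general arrays, yield a constant independent of the $\xi_i$.

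The price is an orthogonal reduction whose constant does not depend on the subspace. Proposition~\ref{prop:orth-reduction} as stated does not give this, and your fallback rests on the same uniformity claim, so it is not an independent route.

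The claim is true and elementary in $L^2$, so you need not track constants in \cite{FOTP}. Set $u=(f+g)/2$ and $v=(f-g)/2$. Then $\min_\sigma\norm{f-\sigma g}_{L^2}=2\min(\norm{u}_{L^2},\norm{v}_{L^2})$, and $\abs{\abs{f}-\abs{g}}=2(\abs{u}\wedge\abs{v})$ pointwise. Orthonormal $u,v$ correspond to the orthonormal pair $(u\pm v)/\sqrt2$. Unit $u,v$ with $\norm{\abs{u}\wedge\abs{v}}_{L^2}=t$ satisfy $\abs{\ip{u}{v}}\le\sqrt2\,t$, so Gram--Schmidt moves $v$ by only $O(t)$. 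Combining these with a homogeneity step gives stability with constant $O(1/c)$ for every subspace $E\subset L^2$ whose orthonormal pairs have modulus separation at least $c$.
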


\begin{proof}
Assume that the conclusion of Theorem~\ref{thm:main} is false.  Then there is
no constant $C$ for which the stable phase retrieval estimate holds uniformly
on the closed span of the coordinates.  By the orthogonal reduction
(Proposition~\ref{prop:orth-reduction}), this means that we may find, for each
$n\in\N$, an orthogonal pair of unit vectors in the span whose modulus separation is
at most $1/n$.  Concretely, there exist functions $X_n,Y_n$ in
$\overline{\Span}(\eta,\xi_i:i\ge2)$ such that
\[
\norm{X_n}_{L^2}=\norm{Y_n}_{L^2}=1,
\qquad
\E[X_nY_n]=0,
\qquad
\norm{|X_n|-|Y_n|}_{L^2}\le \frac1n.
\]

\noindent Since the family $(\eta,\xi_i)_{i\ge2}$ is orthonormal in $L^2$, we may write
\[
X_n=a_1^{(n)}\eta+\sum_{i\ge2}a_i^{(n)}\xi_i,
\qquad
Y_n=b_1^{(n)}\eta+\sum_{i\ge2}b_i^{(n)}\xi_i,
\]
with coefficient vectors $a^{(n)},b^{(n)}\in\ell^2(\N)$.  Parseval's identity
then yields
\[
\norm{a^{(n)}}_2=\norm{X_n}_{L^2}=1,
\qquad
\norm{b^{(n)}}_2=\norm{Y_n}_{L^2}=1,
\]
and, since the two functions are orthogonal in $L^2$,
\[
\ip{a^{(n)}}{b^{(n)}}=\E[X_nY_n]=0.
\]
Finally, to prepare for later row-wise permutations of the good coordinates (the ones with $L^1$ norm bounded away from one), we
rename the good variables used in the $n$-th row by writing them as
$(\xi_{n,i})_{i\ge2}$.  This does not alter their laws or any of the stated
hypotheses, but it is convenient for the compactness scheme.  The displayed
modulus convergence in (ii) follows from our chosen violating sequence.  This completes the proof of the
proposition.
\end{proof}
Our next lemma follows a typical strategy in concentration compactness arguments where one splits a potential sequence of counterexamples into 
pieces with extremal behavior. The terminology in the lemma is consistent with the previous proof. More precisely, the \emph{good coordinates} are the ones with $L^1$ norm bounded away from one and the \emph{exceptional coordinate} is the variable that does not have this property.

\begin{lemma}[Profile decomposition]\label{lem:profile}
After passing to a subsequence and, row by row, applying the same permutation
to the good coordinates and to their coefficients, one may find vectors
$a,b\in\ell^2(\N)$ and integers $m_n\to\infty$ such that, with
\[
a_{\mathrm{head}}^{(n)}:=a^{(n)}\mathbf 1_{\{1,\dots,m_n\}},
\qquad
a_{\mathrm{tail}}^{(n)}:=a^{(n)}\mathbf 1_{\{m_n+1,m_n+2,\dots\}},
\]
and likewise for $b^{(n)}$, the following hold:
\begin{enumerate}[label=\rm(\arabic*)]
\item $a_{\mathrm{head}}^{(n)}\to a$ and $b_{\mathrm{head}}^{(n)}\to b$
strongly in $\ell^2$.
\item We have the convergence
\[
\max_{i>m_n}\max(\abs{a_i^{(n)}},\abs{b_i^{(n)}})\to0.
\]
\item We have the convergence
\[
\norm{a_{\mathrm{tail}}^{(n)}}_2^2\to1-\norm{a}_2^2,
\qquad
\norm{b_{\mathrm{tail}}^{(n)}}_2^2\to1-\norm{b}_2^2,
\]
and
\[
\ip{a_{\mathrm{tail}}^{(n)}}{b_{\mathrm{tail}}^{(n)}}
\to -\ip{a}{b}.
\]
\end{enumerate}
\end{lemma}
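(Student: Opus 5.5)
The idea is a standard ``large coefficients versus small coefficients'' extraction, with the exceptional coordinate $\eta$ kept fixed at position $1$.

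\textbf{Step 1: sorting and diagonal extraction.} For each $n$, we permute the good coordinates $i\ge2$ so that the sequence $\max(\abs{a_i^{(n)}},\abs{b_i^{(n)}})$, $i\ge2$, is nonincreasing. We apply the same permutation to the variables $\xi_{n,i}$, which changes neither the laws nor $X_n,Y_n$. Since $\norm{a^{(n)}}_2=\norm{b^{(n)}}_2=1$, every coordinate lies in $[-1,1]$. A diagonal argument then gives a subsequence along which $a_i^{(n)}\to a_i$ and $b_i^{(n)}\to b_i$ for every fixed $i$. Fatou's lemma gives $a,b\in\ell^2$ with norms at most $1$. After sorting, the $i$-th largest entry satisfies $\max(\abs{a_i^{(n)}},\abs{b_i^{(n)}})^2\le 2/(i-1)$, because the squared norms sum to at most $2$. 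This monotone, uniform decay is what will make the tail diffuse.

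\textbf{Step 2: choosing $m_n$.} For each fixed $m$, the head truncations converge in $\ell^2$, since only finitely many coordinates are involved. Hence we can pick $m_n\to\infty$ slowly enough that
\[
\sum_{i\le m_n}\bigl(\abs{a_i^{(n)}-a_i}^2+\abs{b_i^{(n)}-b_i}^2\bigr)\to0,
\]
for instance by taking $m_n$ as the largest $m\le n$ for which this sum is at most $1/m$. Since $\norm{a\mathbf 1_{\{1,\dots,m_n\}}-a}_2\to0$, this gives (1).

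For (2), monotonicity bounds every tail coordinate by the coordinate at index $m_n+1$, which is at most $\sqrt{2/m_n}\to0$.

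\textbf{Step 3: the limits in (3).} These follow from (1) together with the normalisations:
\[
\norm{a^{(n)}_{\mathrm{tail}}}_2^2=1-\norm{a^{(n)}_{\mathrm{head}}}_2^2\to1-\norm a_2^2,
\]
and similarly for $b$. For the inner product, orthogonality gives
\[
\ip{a^{(n)}_{\mathrm{tail}}}{b^{(n)}_{\mathrm{tail}}}=-\ip{a^{(n)}_{\mathrm{head}}}{b^{(n)}_{\mathrm{head}}}\to-\ip ab.
\]

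\textbf{Main obstacle.} The only delicate point is choosing $m_n$ so that the head converges strongly and not merely coordinatewise. This requires making the growth of $m_n$ compatible with the diagonal subsequence.
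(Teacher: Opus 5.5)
Your proposal is correct and follows the paper's proof essentially step by step. The shared steps are the nonincreasing sort by $\max(\abs{a_i^{(n)}},\abs{b_i^{(n)}})$ with the decay bound $\sqrt{2/(i-1)}$, the diagonal extraction with the Fatou-type bound $\norm{a}_2,\norm{b}_2\le1$, and the same derivations of (2) and (3). The only variation is how $m_n$ is chosen. You take $m_n$ directly as the largest $m\le n$ with head error at most $1/m$, setting $m_n=1$ for the finitely many $n$ where no such $m$ exists. The paper instead fixes $m_k$ with small tails of $a,b$ and passes to a further subsequence so that the head error is at most $2^{-n}$; the two choices are interchangeable.
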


\begin{proof}
For each row $n$ and each good index $i\ge2$, define
\[
s_i^{(n)}:=\max\bigl(\abs{a_i^{(n)}},\abs{b_i^{(n)}}\bigr).
\]
Permuting the good coordinates within the $n$-th row, we may assume that the
sequence $(s_i^{(n)})_{i\ge2}$ is nonincreasing in $i$.  The same permutation
is applied simultaneously to the variables $\xi_{n,i}$ and to both coefficient
vectors, so our hypothesis is not changed.
This rearrangement gives the following uniform pointwise bound on the reordered
coefficients:
\[
\sum_{i\ge2}(s_i^{(n)})^2
\le
\sum_{i\ge2}\bigl((a_i^{(n)})^2+(b_i^{(n)})^2\bigr)
\le 2.
\]
Since $(s_i^{(n)})_{i\ge2}$ is nonincreasing, for every $i\ge2$ we have
\[
(i-1)(s_i^{(n)})^2
\le \sum_{j=2}^{i}(s_j^{(n)})^2
\le 2,
\]
and therefore
\[
s_i^{(n)}\le \sqrt{\frac{2}{i-1}}.
\]
In particular, for each fixed coordinate $i$, the sequences
$\bigl(a_i^{(n)}\bigr)_{n\ge1}$ and $\bigl(b_i^{(n)}\bigr)_{n\ge1}$ are
bounded.

We now extract coordinatewise limits.  First, we pass to a subsequence so that
$a_1^{(n)}$ and $b_1^{(n)}$ converge.  Next, we pass to a further subsequence so
that $a_2^{(n)}$ and $b_2^{(n)}$ converge. We then continue inductively.  A
standard diagonal argument yields a single subsequence, not relabeled, for
which every fixed coordinate converges:
\[
a_i^{(n)}\to a_i,
\qquad
b_i^{(n)}\to b_i
\qquad(i\ge1).
\]
Set
\[
a:=(a_i)_{i\ge1},
\qquad
b:=(b_i)_{i\ge1}.
\]
We claim that $a,b\in\ell^2(\N)$.  To see this, fix $M\in\N$.  Since the first $M$
coordinates converge, we have
\[
\sum_{i=1}^{M} a_i^2
=\lim_{n\to\infty}\sum_{i=1}^{M}(a_i^{(n)})^2
\le \liminf_{n\to\infty}\sum_{i\ge1}(a_i^{(n)})^2
=1.
\]
Letting $M\to\infty$ gives
$\sum_i a_i^2\le1$.  The same argument yields $\sum_i b_i^2\le1$.
Knowing now that $a,b\in\ell^2$, we may choose an increasing sequence of integers
$(m_k)_{k\ge1}$ with $m_k\to\infty$ such that
\[
\sum_{i>m_k} a_i^2+\sum_{i>m_k} b_i^2\le 2^{-k}
\qquad(k\ge1).
\]
For each fixed $k$, coordinatewise convergence implies that
\[
\sum_{i=1}^{m_k}\abs{a_i^{(n)}-a_i}^2
+
\sum_{i=1}^{m_k}\abs{b_i^{(n)}-b_i}^2
\longrightarrow0
\qquad(n\to\infty).
\]
Passing to another subsequence if necessary, we may assume that for each $n$ we have
\[
\sum_{i=1}^{m_n}\abs{a_i^{(n)}-a_i}^2
+
\sum_{i=1}^{m_n}\abs{b_i^{(n)}-b_i}^2
\le 2^{-n}.
\]
We now verify the three conclusions.  First, we note that
\[
\norm{a_{\mathrm{head}}^{(n)}-a}_2^2
\le
\sum_{i=1}^{m_n}\abs{a_i^{(n)}-a_i}^2
+
\sum_{i>m_n}a_i^2
\le 2^{-n}+2^{-n},
\]
so $a_{\mathrm{head}}^{(n)}\to a$ in $\ell^2$.  The same estimate gives
$b_{\mathrm{head}}^{(n)}\to b$.
Second, if $i>m_n$, then by monotonicity we have
\[
\max\bigl(\abs{a_i^{(n)}},\abs{b_i^{(n)}}\bigr)
=s_i^{(n)}
\le s_{m_n+1}^{(n)}
\le \sqrt{\frac{2}{m_n}}.
\]
Since $m_n\to\infty$, this proves
\[
\max_{i>m_n}\max\bigl(\abs{a_i^{(n)}},\abs{b_i^{(n)}}\bigr)\to0.
\]
Finally, because each full coefficient vector has $\ell^2$ norm one,
\[
\norm{a_{\mathrm{tail}}^{(n)}}_2^2
=1-\norm{a_{\mathrm{head}}^{(n)}}_2^2
\longrightarrow 1-\norm{a}_2^2,
\]
and similarly
\[
\norm{b_{\mathrm{tail}}^{(n)}}_2^2
\longrightarrow 1-\norm{b}_2^2.
\]
For the inner products, we use the orthogonality of the full vectors to split
\[
0
=\ip{a^{(n)}}{b^{(n)}}
=\ip{a_{\mathrm{head}}^{(n)}}{b_{\mathrm{head}}^{(n)}}
+\ip{a_{\mathrm{tail}}^{(n)}}{b_{\mathrm{tail}}^{(n)}}.
\]
Since the head component converges strongly, we have
\[
\ip{a_{\mathrm{head}}^{(n)}}{b_{\mathrm{head}}^{(n)}}
\to \ip{a}{b},
\]
and therefore
\[
\ip{a_{\mathrm{tail}}^{(n)}}{b_{\mathrm{tail}}^{(n)}}
\to -\ip{a}{b}.
\]
This proves the lemma.
\end{proof}

\subsection{Identification of the head component} Our next step is to identify the  behavior of the head profiles constructed in the previous step when $n$ is sufficiently large. To this end, we start with an analogue of Lemma \ref{lem:probes} for the exceptional coordinate $\eta$.

\begin{lemma}[Probes for the exceptional coordinate]\label{prop:exceptional-probes}
Let the notation be as above. Then there exists a bounded measurable function $S_\eta$ such that
\[
\E[S_\eta]=0,
\qquad
\E[\eta S_\eta]=1.
\]
Moreover, if $\eta^2\not\equiv1$ almost surely, then there exists a bounded measurable
function $T_\eta$ such that
\[
\E[T_\eta]=0,
\qquad
\E[(\eta^2-1)T_\eta]=1.
\]
\end{lemma}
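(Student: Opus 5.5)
The plan is to repeat the construction in the proof of Lemma~\ref{lem:probes}, but without quantitative bounds, since $\eta$ satisfies no uniform $L^1$ hypothesis. For the first probe, I set $S_0:=\sgn(\eta)$ (with $\sgn(0)=0$). Then $\eta S_0=\abs{\eta}$ almost surely, so $\E[\eta S_0]=\E\abs{\eta}$. The only point to verify is that this quantity is strictly positive. It is, because $\norm{\eta}_{L^2}=1$ forces $\eta\not\equiv0$, and hence $\E\abs{\eta}>0$. I then define
\[
S_\eta:=\frac{S_0-\E[S_0]}{\E\abs{\eta}}.
\]
The same two-line computation as before, using $\E[\eta]=0$, gives $\E[S_\eta]=0$ and $\E[\eta S_\eta]=1$. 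Boundedness is immediate: $\norm{S_\eta}_{L^\infty}\le 2/\E\abs{\eta}<\infty$.

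For the second probe, I apply the same idea to the centered variable $\zeta:=\eta^2-1$. It lies in $L^1$ because $\eta\in L^2$, and $\E[\zeta]=0$ because $\E[\eta^2]=1$. Now set $T_0:=\sgn(\zeta)$, so that $\E[\zeta T_0]=\E\abs{\zeta}$, and define
\[
T_\eta:=\frac{T_0-\E[T_0]}{\E\abs{\zeta}}.
\]
This gives $\E[T_\eta]=0$ and $\E[(\eta^2-1)T_\eta]=1$. The integrability needed is only that $\zeta T_0$ is integrable, which holds since $\zeta\in L^1$ and $T_0$ is bounded.

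The only step requiring the extra hypothesis is the positivity of the denominator $\E\abs{\eta^2-1}$. This is where the assumption $\eta^2\not\equiv1$ enters: it says exactly that $\zeta\neq0$ on a set of positive measure, so $\E\abs{\zeta}>0$. This is also why no such probe can exist in the Rademacher case. There $\eta^2-1\equiv0$, so every $T$ gives $\E[(\eta^2-1)T]=0$.

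I do not expect any real obstacle. The argument is a direct transcription of Lemma~\ref{lem:probes}, with the quantitative lower bounds $A$ and $1-B$ replaced by the qualitative facts $\E\abs{\eta}>0$ and $\E\abs{\eta^2-1}>0$. The resulting $L^\infty$ bounds depend on $\eta$, which is consistent with the constant $C_{\delta,\eta}$ in Theorem~\ref{thm:main}.
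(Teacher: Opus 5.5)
Your proposal is correct and is essentially the paper's proof, which likewise applies the sign-based construction of Lemma~\ref{lem:probes} to $\eta$ and to $\eta^2-1$, using $\E\abs{\eta}>0$ and $\E\abs{\eta^2-1}>0$ in place of the quantitative bounds $A$ and $1-B$. You also note that only $\eta^2-1\in L^1$ is needed. This is a useful clarification, since Lemma~\ref{lem:probes} as stated assumes unit variance, and the paper glosses over that point.
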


\begin{proof}
Since $\norm{\eta}_{L^2}=1$, we have $\E[\abs{\eta}]>0$.  Applying the linear probe construction from
Lemma~\ref{lem:probes} with $A=\E[\abs{\eta}]$ produces a bounded measurable
function $S_\eta$ satisfying
\[
\E[S_\eta]=0,
\qquad
\E[\eta S_\eta]=1.
\]
Assume now that $\eta^2\not\equiv1$ almost surely, so that
\[
\E[\abs{\eta^2-1}]>0.
\]
Applying again the linear probe construction, now to the random variable
$\eta^2-1$, we obtain a bounded measurable function $T_\eta$ such that
\[
\E[T_\eta]=0,
\qquad
\E[(\eta^2-1)T_\eta]=1.
\]
This is exactly the desired conclusion.
\end{proof}

\begin{lemma}[Identification of the head profile]\label{lem:head-sign}
Let the notation be as above. Then either there exists $\tau\in\{\pm1\}$ such that
\[
a=\tau b
\qquad\text{and}\qquad
\norm{a_{\mathrm{head}}^{(n)}-\tau b_{\mathrm{head}}^{(n)}}_2\to0,
\]
or else $\eta$ is a Rademacher variable and
\[
a_i=b_i=0
\qquad(i\ge2).
\]
\end{lemma}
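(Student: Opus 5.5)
The plan is to test the near-identity $X_n^2\approx Y_n^2$ against the bounded probes of Lemma~\ref{lem:probes}, Lemma~\ref{prop:exceptional-probes} and Remark~\ref{rem:probe-extraction}. This reads off the limiting relations $a_i^2=b_i^2$ and $a_ia_j=b_ib_j$ coordinatewise. A short combinatorial sign argument then gives $a=\tau b$, except when $\eta$ is Rademacher and there is no nonzero good coordinate to fix the sign.

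\emph{Step 1: $X_n^2-Y_n^2\to0$ in $L^1$.} Write $\abs{X_n^2-Y_n^2}=\bigl|\abs{X_n}-\abs{Y_n}\bigr|(\abs{X_n}+\abs{Y_n})$. By Cauchy--Schwarz,
\[
\norm{X_n^2-Y_n^2}_{L^1}\le 2\norm{\abs{X_n}-\abs{Y_n}}_{L^2}\to0,
\]
using $\norm{X_n}_{L^2}=\norm{Y_n}_{L^2}=1$ and Proposition~\ref{prop:counterexample}. Hence $\E[(X_n^2-Y_n^2)W_n]\to0$ for any random variables $W_n$ with $\sup_n\norm{W_n}_{L^\infty}<\infty$.

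\emph{Step 2: probe identities.} For each row $n$ and each good index $i\ge2$, let $S_{n,i},T_{n,i}$ be the probes of Lemma~\ref{lem:probes} for $\xi_{n,i}$. Since $\delta\le\norm{\xi_{n,i}}_{L^1}\le1-\delta$, these satisfy $\norm{S_{n,i}}_\infty,\norm{T_{n,i}}_\infty\le 2/\delta$, uniformly in $n$ and $i$. For the exceptional coordinate, take $S_\eta$ (and $T_\eta$ when $\eta^2\not\equiv1$) from Lemma~\ref{prop:exceptional-probes}; these do not depend on $n$. The identities of Remark~\ref{rem:probe-extraction} then give, for all indices $i\ne j$ (including $i$ or $j$ equal to $1$, using $S_\eta$ for index $1$):
\[
\E[(X_n^2-Y_n^2)T_{n,i}]=(a_i^{(n)})^2-(b_i^{(n)})^2,
\qquad
\E[(X_n^2-Y_n^2)S_{n,i}S_{n,j}]=2\bigl(a_i^{(n)}a_j^{(n)}-b_i^{(n)}b_j^{(n)}\bigr).
\]
The first identity also holds for $i=1$ with $T_\eta$, provided $\eta$ is not Rademacher. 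These identities are stated in the Remark for finite sums. Here $X_n$ is only an $L^2$-limit of partial sums, so $X_n^2$ is an $L^1$-limit of the squared partial sums, and pairing with a bounded probe passes to the limit. The expansion uses only independence, $\E[S]=\E[T]=0$ and centering, which kill every cross term; for example $\E[\xi_i^2S_iS_j]=\E[\xi_i^2S_i]\,\E[S_j]=0$. Letting $n\to\infty$ and using the coordinatewise convergence from Lemma~\ref{lem:profile}, we obtain
\[
a_i^2=b_i^2 \quad (i\ge2, \text{ and also } i=1 \text{ if } \eta^2\not\equiv1),
\qquad
a_ia_j=b_ib_j \quad (i\ne j).
\]

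\emph{Step 3: sign bookkeeping.} Let $I:=\{i: a_i\neq0\}$. If $\eta$ is not Rademacher, then $b_i=\pm a_i$ for every $i$. For $i,j\in I$, the relation $a_ia_j=b_ib_j$ forces the two signs to agree, so $b=\tau a$ for a single $\tau\in\{\pm1\}$, taking $\tau=1$ if $a=0$. The coordinates outside $I$ have $b_i=0$, since $b_i^2=a_i^2=0$.

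If $\eta$ is Rademacher, the relation $a_1^2=b_1^2$ is unavailable. If some $i\ge2$ has $a_i\ne0$, the argument above on the indices $\ge2$ gives $b_i=\tau a_i$ for $i\ge2$. Then $a_1a_i=b_1b_i=\tau b_1a_i$ yields $a_1=\tau b_1$, so again $a=\tau b$. Otherwise $a_i=0$ for all $i\ge2$, hence $b_i=0$ for all $i\ge2$ as well, which is the second alternative.

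Finally, in the first alternative the strong convergence in Lemma~\ref{lem:profile}(1) gives
\[
\norm{a^{(n)}_{\mathrm{head}}-\tau b^{(n)}_{\mathrm{head}}}_2\to\norm{a-\tau b}_2=0.
\]

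\emph{Main obstacle.} The delicate point is Step~2: the variables $\xi_{n,i}$ change with $n$ after the row permutations, so the probes must be uniformly bounded in $n$, and the probe identities must be justified for infinite series. Uniform boundedness comes from the two-sided $L^1$ hypothesis via Lemma~\ref{lem:probes}. The infinite-series issue is handled by the $L^1$-approximation argument described in Step~2.
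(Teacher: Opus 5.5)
Your proposal is correct and follows the paper's proof essentially step for step: $L^1$ convergence of $X_n^2-Y_n^2$, then the diagonal, off-diagonal and exceptional-coordinate probe identities passed to the coordinatewise limits, then the sign deduction. The differences are minor: you split cases on whether $\eta$ is Rademacher rather than on whether some good coordinate $a_{i_0}$ is nonzero, and you make explicit two points the paper leaves implicit, namely the uniform bound $2/\delta$ on the row-dependent probes and the passage of the probe identities from finite to infinite sums.
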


\begin{proof}
 Since
\[
\norm{X_n}_{L^2}=\norm{Y_n}_{L^2}=1,
\]
we have
\[
\norm{|X_n|+|Y_n|}_{L^2}\le 2.
\]
Therefore, by H\"older and the fact that our random variables are real-valued, we have
\[
\norm{X_n^2-Y_n^2}_{L^1}\longrightarrow0.
\]
We now analyze several cases in detail.
 \vspace{0.5em}
 
\noindent\emph{Case 1: diagonal terms.}
Fix $i\ge2$ and let $T_{n,i}$ be the quadratic probe associated with the
coordinate $\xi_{n,i}$.  The probes are uniformly bounded in $L^\infty$, so the above
$L^1$ convergence implies that
\[
\E[(X_n^2-Y_n^2)T_{n,i}]\longrightarrow0.
\]
We now compute the left-hand side explicitly.  Expanding $X_n^2$ and integrating
against $T_{n,i}$, every term involving a coordinate different from
$\xi_{n,i}$ vanishes because $T_{n,i}$ depends only on $\xi_{n,i}$ and is
centered.  Mixed terms involving $\xi_{n,i}$ and another coordinate also
vanish by independence and centering.  The only surviving contribution is the
term $(a_i^{(n)})^2\xi_{n,i}^2$.  Therefore, we have
\[
\E[X_n^2T_{n,i}]
=(a_i^{(n)})^2\E[\xi_{n,i}^2T_{n,i}]
=(a_i^{(n)})^2,
\]
because
\[
\E[\xi_{n,i}^2T_{n,i}]
=\E[(\xi_{n,i}^2-1)T_{n,i}]+\E[T_{n,i}]
=1.
\]
Similarly,
\[
\E[Y_n^2T_{n,i}]=(b_i^{(n)})^2.
\]
Therefore,
\[
(a_i^{(n)})^2-(b_i^{(n)})^2
=\E[(X_n^2-Y_n^2)T_{n,i}]
\longrightarrow0.
\]
Passing to the limit in $n$ gives
\[
a_i^2=b_i^2
\qquad(i\ge2).
\]
\vspace{0.1em}

\noindent\emph{Case 2: off-diagonal terms.}
Fix distinct indices $i,j\ge2$ and let $S_{n,i},S_{n,j}$ be the corresponding
linear probes.  Again, boundedness and $L^1$ convergence imply that
\[
\E[(X_n^2-Y_n^2)S_{n,i}S_{n,j}]\longrightarrow0.
\]
In the expansion of $X_n^2$, all terms vanish after integration against
$S_{n,i}S_{n,j}$ except the mixed term
$2a_i^{(n)}a_j^{(n)}\xi_{n,i}\xi_{n,j}$.  Indeed, any term missing one of the
coordinates $\xi_{n,i}$ or $\xi_{n,j}$ is killed by the zero mean of the
corresponding probe, and terms involving other coordinates factor through zero
means by independence.  Hence,
\[
\E[X_n^2S_{n,i}S_{n,j}]
=2a_i^{(n)}a_j^{(n)}
\E[\xi_{n,i}S_{n,i}]\,\E[\xi_{n,j}S_{n,j}]
=2a_i^{(n)}a_j^{(n)}.
\]
Similarly,
\[
\E[Y_n^2S_{n,i}S_{n,j}]
=2b_i^{(n)}b_j^{(n)}.
\]
Thus, we have
\[
2(a_i^{(n)}a_j^{(n)}-b_i^{(n)}b_j^{(n)})\longrightarrow0,
\]
and passing to the limit gives
\[
a_ia_j=b_ib_j
\qquad(i\ne j,\ i,j\ge2).
\]
\vspace{0.1em}

\noindent\emph{Case 3: the exceptional term.}
Fix $j\ge2$.  Let $S_\eta$ be the bounded probe from
Lemma~\ref{prop:exceptional-probes}.  By the same reasoning, we have
\[
\E[(X_n^2-Y_n^2)S_\eta S_{n,j}]\longrightarrow0.
\]
Once more, only one mixed term survives, namely
$2a_1^{(n)}a_j^{(n)}\eta\xi_{n,j}$ for $X_n^2$ and the corresponding term for
$Y_n^2$.  Therefore,
\[
\E[X_n^2S_\eta S_{n,j}]
=2a_1^{(n)}a_j^{(n)}\E[\eta S_\eta]\E[\xi_{n,j}S_{n,j}]
=2a_1^{(n)}a_j^{(n)},
\]
and analogously
\[
\E[Y_n^2S_\eta S_{n,j}]
=2b_1^{(n)}b_j^{(n)}.
\]
Hence,
\[
a_1a_j=b_1b_j
\qquad(j\ge2).
\]
 \vspace{0.1em}
 
\noindent\emph{Step 4: deduction of the sign.}
Assume first that there exists some index $i_0\ge2$ such that $a_{i_0}\ne0$.
From the identity of the diagonal terms we know that
$a_{i_0}^2=b_{i_0}^2$, so $b_{i_0}=\tau a_{i_0}$ for some
$\tau\in\{\pm1\}$.  For every $j\ge2$, the off-diagonal identity then gives
\[
a_{i_0}a_j=b_{i_0}b_j=\tau a_{i_0}b_j.
\]
Since $a_{i_0}\ne0$, we conclude that $a_j=\tau b_j$ for every $j\ge2$.
Applying the identity in Case 3 with $j=i_0$, we obtain
\[
a_1a_{i_0}=b_1b_{i_0}=\tau a_{i_0}b_1,
\]
and again $a_{i_0}\ne0$ implies $a_1=\tau b_1$.  Hence, $a=\tau b$.

Assume next that
\[
a_i=0
\qquad\text{for every }i\ge2.
\]
In this case, the diagonal identities show that we also have $b_i=0$ for every $i\ge2$.
If $\eta^2\not\equiv1$ almost surely, let $T_\eta$ be the quadratic probe from
Lemma~\ref{prop:exceptional-probes}.  Repeating the diagonal argument for
the exceptional coordinate gives
\[
(a_1^{(n)})^2-(b_1^{(n)})^2
=\E[(X_n^2-Y_n^2)T_\eta]
\longrightarrow0,
\]
so $a_1^2=b_1^2$ and we again conclude that $a=\tau b$ for some sign $\tau$.

The only remaining possibility is that $\eta^2\equiv1$ almost surely.  Since
$\E[\eta]=0$ and $\norm{\eta}_{L^2}=1$, this means exactly that $\eta$ is a
Rademacher variable.  In this case, we are in the second alternative stated in
the lemma.

Finally, if the first alternative holds, then the convergence of the head
profiles follows from Lemma~\ref{lem:profile}, since
\[
\norm{a_{\mathrm{head}}^{(n)}-\tau b_{\mathrm{head}}^{(n)}}_2
\le
\norm{a_{\mathrm{head}}^{(n)}-a}_2
+
\norm{b_{\mathrm{head}}^{(n)}-b}_2
\longrightarrow0.
\]
This finishes the proof.
\end{proof}

\subsection{Properties of the limit} We next identify the properties that the limiting distributions stemming from sequences failing SPR must satisfy. 

\begin{lemma}\label{lem:limit-law}
Let the notation be as above. After passing to a subsequence, the quadruple
\[
\bigl(X_n^{\mathrm{head}},Y_n^{\mathrm{head}},
X_n^{\mathrm{tail}},Y_n^{\mathrm{tail}}\bigr)
\]
converges in distribution to a random vector
\[
(H_X,H_Y,R_X,R_Y)\in\R^4
\]
such that
\begin{enumerate}[label=\rm(\arabic*)]
\item $(H_X,H_Y)$ is independent of $(R_X,R_Y)$;
\item the law of $(R_X,R_Y)$ is infinitely divisible;
\item either $H_X=\tau H_Y$ almost surely for some $\tau\in\{\pm1\}$, or
else $\eta$ is Rademacher and
$H_X=a_1\eta$, $H_Y=b_1\eta$ almost surely;
\item
\[
\abs{H_X+R_X}=\abs{H_Y+R_Y}
\qquad\text{almost surely.}
\]
\end{enumerate}
\end{lemma}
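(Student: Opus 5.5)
The plan is to first establish tightness, then extract a jointly convergent subsequence, and then read off the four properties from the structure of the array. Throughout, $X_n^{\mathrm{head}}:=a_1^{(n)}\eta+\sum_{2\le i\le m_n}a_i^{(n)}\xi_{n,i}$, $X_n^{\mathrm{tail}}:=\sum_{i>m_n}a_i^{(n)}\xi_{n,i}$, and similarly for $Y$.

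\textbf{Tightness and extraction.} All four coordinates are bounded in $L^2$, since each has $L^2$ norm at most one by orthonormality of $(\eta,\xi_{n,i})$. Hence the quadruple is tight, and Prokhorov gives a subsequence converging in distribution to some $(H_X,H_Y,R_X,R_Y)$.

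\textbf{Property (1): independence.} For each $n$ the head pair is a function of $(\eta,\xi_{n,i})_{i\le m_n}$ and the tail pair of $(\xi_{n,i})_{i>m_n}$, so the two pairs are independent. Their joint characteristic function therefore factors:
\[
\E e^{i(s\cdot\mathrm{head}_n+t\cdot\mathrm{tail}_n)}=\E e^{i s\cdot \mathrm{head}_n}\,\E e^{it\cdot\mathrm{tail}_n}.
\]
Passing to the limit (marginals converge along the subsequence) yields the same factorization for the limit, which gives independence.

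\textbf{Property (2): infinite divisibility.} I would apply Lemma~\ref{lem:diffuse-id-limit} with $d=2$ and $\eta_{n,i}:=(a_i^{(n)}\xi_{n,i},b_i^{(n)}\xi_{n,i})$ for $i>m_n$, and $\eta_{n,i}:=0$ otherwise. Row sums converge in $L^2$, hence in probability. Infinitesimality follows from Chebyshev,
\[
\Prob\{\norm{\eta_{n,i}}>\eps\}\le \eps^{-2}\bigl((a_i^{(n)})^2+(b_i^{(n)})^2\bigr),
\]
together with Lemma~\ref{lem:profile}(2).

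\textbf{Property (3): the head relation.} This comes from Lemma~\ref{lem:head-sign}. In the first alternative,
\[
\norm{X_n^{\mathrm{head}}-\tau Y_n^{\mathrm{head}}}_{L^2}=\norm{a^{(n)}_{\mathrm{head}}-\tau b^{(n)}_{\mathrm{head}}}_2\to0,
\]
so the pair $(X_n^{\mathrm{head}}-\tau Y_n^{\mathrm{head}},\,\cdot)$ converges to zero in probability, and by the continuous mapping theorem $H_X-\tau H_Y=0$ almost surely. In the Rademacher alternative, $a_i=b_i=0$ for $i\ge2$. I would then note that
\[
\norm{X_n^{\mathrm{head}}-a_1\eta}_{L^2}^2=|a_1^{(n)}-a_1|^2+\sum_{2\le i\le m_n}(a_i^{(n)})^2\to0
\]
by strong head convergence, and the same holds for $Y$. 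One subtlety is that "$H_X=a_1\eta$" only makes sense if we realize the limit on a space carrying $\eta$. Since $\eta$ is fixed in every row, $(X_n^{\mathrm{head}},Y_n^{\mathrm{head}})\to(a_1\eta,b_1\eta)$ in $L^2$ on the original space. The statement should be read as an equality in law, or one can take $H_X:=a_1\eta$ and $H_Y:=b_1\eta$ by construction after building the limit on a product space with an independent copy of the tail limit.

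\textbf{Property (4): the modulus identity.} The map $(h_x,h_y,r_x,r_y)\mapsto|h_x+r_x|-|h_y+r_y|$ is continuous. Its value at the $n$th quadruple is $|X_n|-|Y_n|$, which tends to $0$ in $L^2$ and hence in probability. By the continuous mapping theorem the limit variable has the law of the distributional limit, namely $\delta_0$, so the identity holds almost surely.

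\textbf{Main obstacle.} I expect property (3) in the Rademacher branch to be the main point. There one must make sure the limit can be coupled with $\eta$. I would handle it by proving convergence of the pair $\bigl((X_n^{\mathrm{head}},Y_n^{\mathrm{head}}),(X_n^{\mathrm{tail}},Y_n^{\mathrm{tail}})\bigr)$ to a product law, first factor $(a_1\eta,b_1\eta)$ or $\tau$-diagonal, and then realizing that product law on a space where the first factor is literally built from $\eta$.
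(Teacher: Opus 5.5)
Your proposal is correct and follows the same outline as the paper's proof: joint extraction, persistence of head--tail independence, infinite divisibility of the tail limit via infinitesimality and Lemma~\ref{lem:diffuse-id-limit}, the head description from Lemma~\ref{lem:head-sign}, and the modulus identity via convergence in probability, with considerably more detail than the paper gives. Your extraction via tightness and Prokhorov is the safer choice, because the $\xi_{n,i}$ are relabeled row by row; the head variables then converge in law along a subsequence but need not converge in $L^2$ as the paper's one-line argument asserts, whereas your own $L^2$ claims are made only for $X_n^{\mathrm{head}}-\tau Y_n^{\mathrm{head}}$ and in the Rademacher branch, where they hold.
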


\begin{proof}
The head variables converge in $L^2$ by the strong convergence of the head
coefficients and the orthonormality of each row.  The tail coefficient arrays
are infinitesimal by Lemma~\ref{lem:profile}, so every weak limit of the
tail pair is infinitely divisible by Lemma \ref{lem:diffuse-id-limit}.  Independence of the head and the tail persists
because they are built from disjoint independent coordinates.  Finally,
$\norm{\abs{X_n}-\abs{Y_n}}_{L^2}\to0$ implies
$\abs{X_n}-\abs{Y_n}\to0$ in probability, so passing to the joint weak limit
gives the desired identity in the limit.  The description of the head follows
from Lemma~\ref{lem:head-sign}.
\end{proof}
We are now in a position to begin understanding the weak limits obtained above, with the objective of obtaining a contradiction. The key geometric observation
is contained in the following lemma.
\begin{lemma}\label{lem:id-line}
Let $\mu$ be an infinitely divisible probability measure on $\R^2$ whose support satisfies
\[
\supp\mu\subset\{(u,v)\in\R^2:\abs{u}=\abs{v}\} \eqqcolon \Gamma.
\]
Then there exists $\tau\in\{\pm1\}$ such that
\[
\supp\mu\subset \{(u,\tau u):u\in\R\} \eqqcolon L_\tau.
\]
\end{lemma}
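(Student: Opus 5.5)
We plan to use the Lévy--Khintchine structure of $\mu$: Gaussian covariance $Q$, Lévy measure $\nu$, drift $\gamma$. The key classical fact we use is the support theorem for infinitely divisible laws. The support of $\mu$ is closed under addition, in the sense that $\supp\mu+\supp\mu\subset\supp\mu$ whenever $Q=0$ and $\nu$ is finite, and more generally it contains $x+\supp\nu$ for every $x\in\supp\mu$.

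A cleaner route, avoiding the full support theorem, is convolution closure. Write $\mu=\mu_{1/2}*\mu_{1/2}$ with $\mu_{1/2}$ infinitely divisible. Then $\supp\mu=\overline{\supp\mu_{1/2}+\supp\mu_{1/2}}$, so for every $x,y\in\supp\mu_{1/2}$ we have $x+y\in\Gamma$. Iterating over $\mu_{1/2^k}$ and using $\mu=\mu_{1/m}^{*m}$, every sum $x_1+\dots+x_m$ with $x_j\in\supp\mu_{1/m}$ lies in $\Gamma$. In particular $m x\in\Gamma$ for $x\in\supp\mu_{1/m}$, so $\supp\mu_{1/m}\subset\Gamma$ by homogeneity of $\Gamma$.

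The main step is to show that a set $A\subset\Gamma$ with $A+A\subset\Gamma$ (indeed $A+A+\dots\subset\Gamma$) must lie in a single diagonal $L_\tau$ modulo the origin. Take $x=(s,s)\in L_{+}$ and $y=(t,-t)\in L_-$ with $s,t\ne0$. Then $x+y=(s+t,s-t)$ lies in $\Gamma$ iff $|s+t|=|s-t|$, i.e.\ $st=0$, which is a contradiction. Hence $A\subset L_+$ or $A\subset L_-$, and $A$ contains $0$ only if $A=\{0\}$ on both. Applied to $A=\supp\mu_{1/m}$, this gives $\supp\mu_{1/m}\subset L_{\tau_m}$. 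Consequently $\supp\mu\subset\overline{\supp\mu_{1/m}^{+m}}\subset L_{\tau_m}$. Any single $m$ (e.g.\ $m=2$) already finishes the argument: choose $\tau=\tau_2$. If $\mu=\delta_0$, either sign works.

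We expect the main obstacle to be the identity $\supp(\alpha*\beta)=\overline{\supp\alpha+\supp\beta}$ in the non-compact case, which is needed to pass from $\supp\mu\subset\Gamma$ to $\supp\mu_{1/2}+\supp\mu_{1/2}\subset\Gamma$. Only the inclusion $\supp\alpha+\supp\beta\subset\supp(\alpha*\beta)$ is needed there. We will verify it directly: for $x\in\supp\alpha$, $y\in\supp\beta$ and $\eps>0$, we have $(\alpha*\beta)(B(x+y,2\eps))\ge\alpha(B(x,\eps))\beta(B(y,\eps))>0$. Since $\Gamma$ is closed, this suffices.
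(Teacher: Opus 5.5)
Your proof is correct and is essentially the paper's argument. You use $2$-divisibility $\mu=\mu_{1/2}*\mu_{1/2}$ to get $\supp\mu_{1/2}+\supp\mu_{1/2}\subset\supp\mu\subset\Gamma$ (hence $\supp\mu_{1/2}\subset\Gamma$ by homogeneity), and then note that a nonzero point of $L_+$ plus a nonzero point of $L_-$ leaves $\Gamma$; your computation $|s+t|=|s-t|\iff st=0$ is a cleaner rendering of that step than the paper's.

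Two minor remarks. First, the last step uses the reverse inclusion $\supp\mu\subset\overline{\supp\mu_{1/2}+\supp\mu_{1/2}}$; this is immediate here because $\mu_{1/2}(L_\tau)=1$ and $L_\tau+L_\tau=L_\tau$ is closed. Second, the ``support theorem'' claim in your first paragraph is false as stated: a point mass at $x_0\neq0$ has $Q=0$ and finite L\'evy measure, but its support is not closed under addition. Nothing in your actual argument relies on it.
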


\begin{proof}
The set $\Gamma$ is exactly the union of two lines $L_+\cup L_-$.  Since $\mu$ is
infinitely divisible, it is in particular $2$-divisible, meaning that there exists a
probability measure $\nu$ such that $\mu=\nu*\nu$.  Let $A=\supp\nu$.  Then
\[
\supp\mu=\overline{A+A}\subset L_+\cup L_-.
\]
If $x,y\in A$ are distinct, then $2x,x+y,2y\in A+A$.  If $2x$ and $2y$ belong
to the same line $L_\tau$, then so does $x+y$, forcing $x,y\in L_\tau$.  If instead
they belong to different lines, then the midpoint $x+y$ is the origin, so
$y=-x$ and again $x,y$ lie in one of the lines.  Thus, every two points of $A$
lie on a common  line.  Since $L_+\cap L_-=\{0\}$, this implies
that $A$ is contained in one of the two lines, and so is $\supp\mu$.
\end{proof}

\subsection{Conclusion of the compactness proof}

\begin{proof}[Proof of Theorem~\ref{thm:main}]
Assume that Theorem~\ref{thm:main} fails, let the counterexample array be as
in Proposition~\ref{prop:counterexample} and apply
Lemmas~\ref{lem:profile}, \ref{lem:head-sign}, and \ref{lem:limit-law}.

If the first alternative of Lemma~\ref{lem:head-sign} holds, fix
$\tau\in\{\pm1\}$ such that $H_Y=\tau H_X$ almost surely.  The  modulus
identity becomes
\[
\abs{H_X+R_X}=\abs{\tau H_X+R_Y}
\qquad\text{almost surely.}
\]
If $H_X$ is not almost surely zero, then conditioning on the tail and varying
the head over two support points of $H_X$ shows that
$R_Y-\tau R_X=0$ almost surely.  If $H_X\equiv0$, then
$(R_X,R_Y)$ itself satisfies $\abs{R_X}=\abs{R_Y}$ almost surely, so
Lemma~\ref{lem:id-line} yields
$R_Y=\tau_0 R_X$ almost surely for some $\tau_0\in\{\pm1\}$.

In either case, there exists a sign $\sigma\in\{\pm1\}$ for which the random variables
\[
Z_n:=\sum_{i>m_n}(b_i^{(n)}-\sigma a_i^{(n)})\xi_{n,i}
\]
converge to $0$ in distribution.  Since the coefficients are diffuse,
Lemma~\ref{lem:tail-rigidity} gives
\[
\norm{b_{\mathrm{tail}}^{(n)}-\sigma a_{\mathrm{tail}}^{(n)}}_2\to0.
\]
Together with Lemma~\ref{lem:head-sign} this implies that
$\norm{b^{(n)}-\sigma a^{(n)}}_2\to0$, contradicting
\[
\norm{b^{(n)}-\sigma a^{(n)}}_2^2
=\norm{a^{(n)}}_2^2+\norm{b^{(n)}}_2^2
-2\sigma\ip{a^{(n)}}{b^{(n)}}=2.
\]
If the second alternative of Lemma~\ref{lem:head-sign} holds, then
$\eta$ is Rademacher and the head limits are multiples of $\eta$ alone.  The
same argument therefore applies, since the good coordinates disappear from
the head and the tail limit still lies on $\Gamma$.
\end{proof}

\subsection{An alternative argument}\label{ssec:gaussian-alt}
The previous argument used only the support structure of infinitely divisible
limits.  The discussion below sketches a second route which may be used to complete the compactness proof, showing that in the regime of a degenerate head component, the
diffuse tail automatically generates a nontrivial Gaussian component.

We start with a result that ensures that, in spite of the failure of the Lindeberg condition for the Central Limit Theorem, one may still ensure the existence of a nondegenerate Gaussian component of the limiting distribution. 
\begin{proposition}
\label{prop:gaussian-part}
Let
\[
V_{n,i}:=(a_i^{(n)}\xi_{n,i},\,b_i^{(n)}\xi_{n,i})\in\R^2
\qquad(i>m_n),
\]
where the coordinates are independent, centered, $L^2$-normalized and satisfy
$\norm{\xi_{n,i}}_{L^1}\ge \delta$.  Set
\[
\alpha_{n,i}:=\bigl((a_i^{(n)})^2+(b_i^{(n)})^2\bigr)^{1/2}.
\]
Assume that
\[
\max_{i>m_n}\alpha_{n,i}\to0,
\qquad
\sum_{i>m_n}\alpha_{n,i}^2\to \sigma^2>0,
\]
and that the row sums $\sum_{i>m_n}V_{n,i}$ converge in distribution to an
infinitely divisible law $\mu$ on $\R^2$.  Then the covariance matrix of the Gaussian component of $\mu$ is not identically zero.
\end{proposition}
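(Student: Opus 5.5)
The approach is to read off the Gaussian covariance of $\mu$ from truncated second moments of the array, and to bound these from below using only the $L^1$ hypothesis $\norm{\xi_{n,i}}_{L^1}\ge\delta$. This hypothesis is exactly what survives without a Lindeberg condition. I will invoke the convergence criterion for infinitesimal arrays that underlies Lemma~\ref{lem:diffuse-id-limit} (Gnedenko--Kolmogorov; see \cite[Chapter~IV]{gnedenko-kolmogorov} or the multivariate version in \cite{sato-levy}). Let $\mu$ have Lévy--Khintchine triplet $(\Sigma,\nu,\gamma)$. If the row sums of an infinitesimal array converge to $\mu$, then for every $\eps>0$ with $\nu(\{\norm{x}=\eps\})=0$ the truncated covariances
\[
\Sigma_n^{\eps}:=\sum_{i>m_n}\Bigl(\E\bigl[V_{n,i}V_{n,i}^{\top}\mathbf 1_{\{\norm{V_{n,i}}\le\eps\}}\bigr]-\E\bigl[V_{n,i}\mathbf 1_{\{\norm{V_{n,i}}\le\eps\}}\bigr]\E\bigl[V_{n,i}\mathbf 1_{\{\norm{V_{n,i}}\le\eps\}}\bigr]^{\top}\Bigr)
\]
satisfy
\[
\lim_{\eps\downarrow0}\limsup_{n}\operatorname{tr}\Sigma_n^{\eps}=\lim_{\eps\downarrow0}\liminf_n\operatorname{tr}\Sigma_n^{\eps}=\operatorname{tr}\Sigma.
\]
The array is infinitesimal here, since $\Prob\{\norm{V_{n,i}}>\eps\}\le\alpha_{n,i}^2/\eps^2\to0$ uniformly in $i$. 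It therefore suffices to show that $\liminf_n\operatorname{tr}\Sigma_n^\eps\ge\delta^2\sigma^2$ for every fixed $\eps>0$.

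\textbf{Main term.} Since $\norm{V_{n,i}}=\alpha_{n,i}\abs{\xi_{n,i}}$, we have
\[
\E\bigl[\norm{V_{n,i}}^2\mathbf 1_{\{\norm{V_{n,i}}\le\eps\}}\bigr]=\alpha_{n,i}^2\,\E\bigl[\xi_{n,i}^2\mathbf 1_{\{\abs{\xi_{n,i}}\le K_{n,i}\}}\bigr],\qquad K_{n,i}:=\eps/\alpha_{n,i}.
\]
Since $\Prob\le1$, Cauchy--Schwarz gives
\[
\E\bigl[\xi^2\mathbf 1_{\{\abs{\xi}\le K\}}\bigr]\ge\bigl(\E[\abs{\xi}\mathbf 1_{\{\abs{\xi}\le K\}}]\bigr)^2.
\]
Also $\E[\abs{\xi}\mathbf 1_{\{\abs{\xi}>K\}}]\le\E[\xi^2]/K=1/K$, and hence $\E[\abs{\xi}\mathbf 1_{\{\abs\xi\le K\}}]\ge\delta-1/K$. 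This is the only place the $L^1$ hypothesis enters, and it is uniform over all laws in the array. Because $\max_{i>m_n}\alpha_{n,i}\to0$, we get $K_{n,i}\ge\eps/\max_i\alpha_{n,i}\to\infty$ uniformly in $i$. The main term is therefore at least
\[
\Bigl(\delta-\max_{i>m_n}\alpha_{n,i}/\eps\Bigr)_+^2\sum_{i>m_n}\alpha_{n,i}^2\longrightarrow\delta^2\sigma^2 .
\]

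\textbf{Centering correction.} Since $\E[V_{n,i}]=0$, we have
\[
\E\bigl[V_{n,i}\mathbf 1_{\{\norm{V_{n,i}}\le\eps\}}\bigr]=-\E\bigl[V_{n,i}\mathbf 1_{\{\norm{V_{n,i}}>\eps\}}\bigr],
\]
whose norm is at most $\E\norm{V_{n,i}}^2/\eps=\alpha_{n,i}^2/\eps$. The subtracted trace is therefore bounded by
\[
\eps^{-2}\sum_i\alpha_{n,i}^4\le\eps^{-2}\Bigl(\max_{i>m_n}\alpha_{n,i}^2\Bigr)\sum_{i>m_n}\alpha_{n,i}^2\to0.
\]
Combining the two estimates gives $\liminf_n\operatorname{tr}\Sigma_n^\eps\ge\delta^2\sigma^2$ for every $\eps$. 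Hence $\operatorname{tr}\Sigma\ge\delta^2\sigma^2>0$, and in particular $\Sigma\neq0$.

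The main obstacle is the first step: stating and justifying the convergence criterion $\Sigma=\lim_{\eps\downarrow0}\lim_n\Sigma_n^\eps$ in a form applicable here. This requires three things. First, we must reduce the countable rows to finite ones, as in the proof of Lemma~\ref{lem:diffuse-id-limit}, and check that the truncated moments of the discarded summands are negligible; this follows from $L^2$ summability of the coefficients. Second, we must restrict to $\eps$ that are continuity radii of $\nu$, which are cofinal as $\eps\downarrow0$. Third, we must use the version of the criterion with $\limsup$ and $\liminf$, which is what the classical theorem provides. Everything else is the elementary truncation estimate above.
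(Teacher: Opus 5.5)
Your proposal is correct and follows essentially the same route as the paper's sketch. In both, you truncate each $\xi_{n,i}$ at a level tending to infinity, use $\norm{\xi_{n,i}}_{L^1}\ge\delta$ to bound the truncated second moments from below uniformly in the array, and transfer that bound to the Gaussian covariance of $\mu$ through the Gnedenko--Kolmogorov truncated-covariance criterion for infinitesimal arrays. The only real difference is that you truncate at a fixed radius $\eps$ and let $\eps\downarrow0$ afterwards, which is exactly how the classical criterion is stated and gives the explicit bound $\operatorname{tr}\Sigma\ge\delta^2\sigma^2$; the paper instead truncates at the $n$-dependent radius $r_n=\beta_n^{1/2}$ and appeals directly to convergence of shrinking truncated covariances.
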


\begin{proof}[Sketch of proof]
Set $\beta_n=\max_{i>m_n}\alpha_{n,i}$ and $r_n=\beta_n^{1/2}$.  For each
$i>m_n$ with $\alpha_{n,i}\ne0$, let $K_{n,i}=r_n/\alpha_{n,i}\to\infty$
uniformly and define the centered truncation
\[
\widetilde\xi_{n,i}
:=
\xi_{n,i}\mathbf 1_{\{\abs{\xi_{n,i}}\le K_{n,i}\}}
-\E[\xi_{n,i}\mathbf 1_{\{\abs{\xi_{n,i}}\le K_{n,i}\}}].
\]
Since $\E[\abs{\xi_{n,i}}]\ge\delta$ and $\E[\xi_{n,i}^2]=1$, a simple estimate shows that there exists $c_0=c_0(\delta)>0$ such that
\[
\E[\widetilde\xi_{n,i}^2]\ge c_0
\]
for all large $n$ and all $i>m_n$.  Define
$\widetilde V_{n,i}=(a_i^{(n)}\widetilde\xi_{n,i},\,b_i^{(n)}\widetilde\xi_{n,i})$.
Then $\abs{\widetilde V_{n,i}}\le 2r_n$, so the truncated array is bounded,
infinitesimal and we have
\[
\sum_{i>m_n}\E[\abs{\widetilde V_{n,i}}^2]
\ge c_0\sum_{i>m_n}\alpha_{n,i}^2
\longrightarrow c_0\sigma^2>0.
\]
For infinitesimal triangular arrays, the covariance matrices of shrinking
centered truncations converge to the Gaussian covariance of the limit law (see
\cite[Chapter~IV]{gnedenko-kolmogorov}).  Since the traces of the truncation
covariances stay bounded away from zero, the Gaussian covariance of $\mu$ is
nonzero.
\end{proof}

Using Proposition~\ref{prop:gaussian-part}, we may present an alternative route to the conclusion of the compactness proof of Theorem~\ref{thm:main}. Assume that we are in the degenerate head regime $H_X\equiv H_Y\equiv0$ (as the other one is essentially trivial) so the
tail component $\mu$, the law of $(R_X,R_Y)$, is supported on $\Gamma=\{(u,v):\abs{u}=\abs{v}\}$.
Write $\mu=\gamma*\nu$, where $\gamma$ is the Gaussian part and $\nu$ the
independent remainder term.  By Proposition~\ref{prop:gaussian-part}, $\gamma$ is
nontrivial.  Its support is a nonzero linear subspace $E\subset\R^2$.
Since $\supp\mu=\supp\gamma+\supp\nu\subset\Gamma=L_+\cup L_-$, the
connected set $E$ must itself lie in one of the lines $L_\tau$.  Fix such a
$\tau$.  For any $z\in\supp\nu$, the translated line $E+z$ is contained in
$\Gamma$.  Since a translate of $L_\tau$ not equal to $L_\tau$ intersects
$L_+\cup L_-$ in at most two points, one must have $z\in L_\tau$.  It follows that
$\supp\nu\subset L_\tau$ and therefore $\supp\mu\subset L_\tau$.  Thus,
$R_Y=\tau R_X$ almost surely, and this recovers the final contradiction in the first proof of Theorem \ref{thm:main}. 

\section{Second Proof: Anticoncentration estimates}\label{sec:quant-proof}

This section gives a second proof of Theorem~\ref{thm:main}.  The argument is
quantitative in spirit.  Instead of passing to a limit law, we split the
coefficients into a head and a tail and show directly that one of two explicit
mechanisms forces a non-trivial separation of the modulus of the random variables.

 By Proposition~\ref{prop:orth-reduction}, it is enough to prove a uniform
lower bound for arbitrary orthonormal vectors $X, Y$ in the closed span of the random variables. By a standard density argument, we may consider the case with only finitely many (say, $n$) random variables, as long as the constants do not depend on $n$.

\subsection{The dichotomy setup}
Let the notation be as above. We start with the basic setup for our second proof, which is based on a simple dichotomy. Let
\[
\eps:=\E[\abs{X^2-Y^2}],
\qquad
\Delta:=\norm{|X|-|Y|}_{L^2},
\]
and notice that (under the above normalization assumptions)
\[
\eps\le 2\Delta.
\]
Fix
\[
\kappa_{\delta,\eta}
:=
\min\left(
\frac{\delta}{4\sqrt2},
\frac{\E[\abs{\eta-\widetilde\eta}]}{2\sqrt2}
\right)>0,
\]
where $\widetilde\eta$ is an independent copy of $\eta$.  Next, set
\[
\theta:=\min\bigl(\delta^2/8,\delta\kappa_{\delta,\eta}\bigr),
\qquad
\lambda:=\min\bigl(\delta^3/2^{20},\delta\theta/2^{17}\bigr),
\qquad
s_0:=\frac{\theta^2}{2^{14}}.
\]
Let $S_\eta$ be the bounded linear probe from
Lemma~\ref{prop:exceptional-probes}. Choose a
constant $K_{\delta,\eta}$ so that
\[
K_{\delta,\eta}\ge \max\bigg\{\frac{4}{\delta^2},\frac{2}{\delta
}\norm{S_\eta}_{L^\infty}\bigg\} \ge 1.
\]
 In the case $\eta^2\not\equiv1$, let
$T_\eta$ be the bounded quadratic probe and increase $K_{\delta,\eta}$ (if necessary) to also satisfy
\[
K_{\delta,\eta}\ge \frac{2}{\delta} \norm{T_\eta}_{L^\infty}.
\]
Define the head coordinates $H$ by
\[
H(a,b):=\{1\}\cup\{i\ge2:\max(|a_i|,|b_i|)>\lambda\}.
\]
Note that the cardinality of $H$ is bounded by
\[
\#H\le 3+2\lambda^{-2} \eqqcolon d^{\ast}(\delta,\eta),
\]
and that every tail coordinate $i\notin H$ satisfies
\[
\max(|a_i|,|b_i|)\le \lambda.
\]
The first step is to separate the proof into two mutually exclusive regimes.
We record our simple case split in the following lemma, whose proof we omit. 
\begin{lemma}[Tail dichotomy]\label{lem:dichotomy}
Exactly one of the following alternatives holds.
\begin{enumerate}[label=\rm(\roman*)]
\item \textit{Diffuse case:} We have
\[
\norm{(a-b)\mathbf 1_{H^c}}_2^2\ge16s_0
\quad\text{and}\quad
\norm{(a+b)\mathbf 1_{H^c}}_2^2\ge16s_0.
\]
\item \textit{Concentrated case:} There exists $\tau\in\{\pm1\}$ such that
\[
\norm{(a-\tau b)\mathbf 1_{H^c}}_2<\frac{\theta}{32}.
\]
\end{enumerate}
\end{lemma}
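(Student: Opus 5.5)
The plan is to show directly that the two alternatives are logically complementary once the constants are unwound. The key observation is that the threshold $16s_0$ in the diffuse case and the threshold $\theta/32$ in the concentrated case describe the same cutoff. Indeed, since $s_0=\theta^2/2^{14}$, we have
\[
16s_0=\frac{\theta^2}{2^{10}}=\Bigl(\frac{\theta}{32}\Bigr)^2 .
\]
Consequently, for each $\tau\in\{\pm1\}$,
\[
\norm{(a-\tau b)\mathbf 1_{H^c}}_2<\frac{\theta}{32}
\quad\Longleftrightarrow\quad
\norm{(a-\tau b)\mathbf 1_{H^c}}_2^2<16s_0 .
\]
I would first record this equivalence. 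For the two signs, $a-\tau b$ is $a-b$ when $\tau=1$ and $a+b$ when $\tau=-1$.

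\emph{Mutual exclusivity.} Suppose the concentrated case holds with some sign $\tau$. By the equivalence, $\norm{(a-\tau b)\mathbf 1_{H^c}}_2^2<16s_0$. This is exactly the negation of one of the two inequalities required in the diffuse case: the one for $a-b$ if $\tau=1$, and the one for $a+b$ if $\tau=-1$. Hence (i) and (ii) cannot hold simultaneously.

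\emph{Exhaustiveness.} Suppose the diffuse case fails. Then at least one of the two inequalities in (i) fails, so there is a sign $\tau\in\{\pm1\}$ with $\norm{(a-\tau b)\mathbf 1_{H^c}}_2^2<16s_0$. By the equivalence, this gives $\norm{(a-\tau b)\mathbf 1_{H^c}}_2<\theta/32$, which is the concentrated case with this $\tau$. Therefore exactly one alternative holds.

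There is no real obstacle here. The only point requiring care is the arithmetic identity $16s_0=(\theta/32)^2$, together with the strict versus non-strict inequalities. These match precisely: the diffuse case uses $\ge$ and the concentrated case uses $<$, so the two alternatives are complementary rather than overlapping or leaving a gap. The specific definition of the head set $H$ plays no role; the statement holds for any index set $H$.
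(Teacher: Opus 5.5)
Your proof is correct, and it is the case split the paper has in mind (the paper omits the proof): since $s_0=\theta^2/2^{14}$ gives $16s_0=(\theta/32)^2$, the concentrated alternative for a sign $\tau$ is exactly the negation of the corresponding inequality in the diffuse alternative. The only fact you use tacitly is $\theta>0$ (from $\delta>0$ and $\kappa_{\delta,\eta}>0$), which is what lets you pass between $\norm{(a-\tau b)\mathbf 1_{H^c}}_2<\theta/32$ and $\norm{(a-\tau b)\mathbf 1_{H^c}}_2^2<16s_0$.
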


\subsection{The concentrated case}
In the concentrated case, the tail of
$a-\tau b$ is small for a suitable sign $\tau$, so any substantial discrepancy
between the two vectors must be visible from the head.  The next proposition
extracts information about the
head coefficients.

\begin{proposition}
\label{prop:head-entry}
The following estimates hold.
\begin{enumerate}[label=\rm(\arabic*)]
\item For any indices $i,j\in H\cap\{2,3,\dots\}$,
\[
|a_i a_j-b_i b_j|\le K_{\delta,\eta}\,\eps.
\]
\item For any index $j\in H\cap\{2,3,\dots\}$,
\[
|a_1a_j-b_1b_j|\le K_{\delta,\eta}\,\eps.
\]
\item If $\eta^2\not\equiv1$, then
\[
|a_1^2-b_1^2|\le K_{\delta,\eta}\,\eps.
\]
\end{enumerate}
\end{proposition}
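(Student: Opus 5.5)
This is the probe computation from Lemma~\ref{lem:head-sign}, made quantitative. The only inputs are the $L^1$ smallness $\eps=\E[\abs{X^2-Y^2}]$ and the $L^\infty$ bounds on the probes. The guiding identity is
\[
\bigl|\E[(X^2-Y^2)P]\bigr|\le \norm{P}_{L^\infty}\,\eps ,
\]
valid for any bounded measurable $P$. We apply it to suitable products of probes $P$ and evaluate the left-hand side exactly through the expansion in Remark~\ref{rem:probe-extraction}. Throughout, $X=a_1\eta+\sum_{i\ge2}a_i\xi_i$ and $Y=b_1\eta+\sum_{i\ge2}b_i\xi_i$, with finitely many summands, so every expansion is a finite sum.

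For item (1) we split into $i\ne j$ and $i=j$.

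\emph{Case $i\ne j$.} Take $P=S_iS_j$, where $S_i,S_j$ are the linear probes of Lemma~\ref{lem:probes}(1) with $A=\delta$, so that $\norm{S_iS_j}_{L^\infty}\le 4/\delta^2$. Expand $X^2=\sum_{k,l}a_ka_l\zeta_k\zeta_l$ with $\zeta_1=\eta$ and $\zeta_k=\xi_k$. By independence, centering of the probes and centering of the variables, only the term $2a_ia_j\xi_i\xi_j$ survives. This gives $\E[X^2S_iS_j]=2a_ia_j$, and likewise $\E[Y^2S_iS_j]=2b_ib_j$. Hence
\[
|a_ia_j-b_ib_j|\le \frac{2}{\delta^2}\,\eps\le K_{\delta,\eta}\,\eps .
\]

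\emph{Case $i=j$.} Take $P=T_i$, the quadratic probe from Lemma~\ref{lem:probes}(2) with $B=1-\delta$, so that $\norm{T_i}_{L^\infty}\le 2/\delta$. As in Case~1 of Lemma~\ref{lem:head-sign}, the only surviving term is $a_i^2\E[\xi_i^2T_i]=a_i^2$. Therefore
\[
|a_i^2-b_i^2|\le \frac{2}{\delta}\,\eps\le \frac{4}{\delta^2}\,\eps\le K_{\delta,\eta}\,\eps,
\]
where the middle inequality uses $\delta\le 1$, which holds since $\norm{\xi_i}_{L^1}\le\norm{\xi_i}_{L^2}=1$.

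For item (2), take $P=S_\eta S_j$ with $S_\eta$ the exceptional probe from Lemma~\ref{prop:exceptional-probes}, so that $\norm{S_\eta S_j}_{L^\infty}\le 2\norm{S_\eta}_{L^\infty}/\delta$. The same expansion leaves only $2a_1a_j\E[\eta S_\eta]\E[\xi_jS_j]=2a_1a_j$. This yields
\[
|a_1a_j-b_1b_j|\le \frac{\norm{S_\eta}_{L^\infty}}{\delta}\,\eps\le K_{\delta,\eta}\,\eps .
\]

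For item (3), when $\eta^2\not\equiv1$, take $P=T_\eta$. Terms containing a lone centered $\xi_k$ or $\eta$ vanish because $T_\eta$ depends only on $\eta$. Terms $a_k^2\xi_k^2$ with $k\ge2$ give $a_k^2\E[\xi_k^2]\E[T_\eta]=0$. Only $a_1^2\E[\eta^2T_\eta]=a_1^2$ remains, so
\[
|a_1^2-b_1^2|\le \norm{T_\eta}_{L^\infty}\,\eps\le \frac{\delta}{2}K_{\delta,\eta}\,\eps\le K_{\delta,\eta}\,\eps .
\]

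There is no real obstacle here. The only care needed is the bookkeeping in the expansions: checking that every cross term is killed, in particular the terms $a_k^2\xi_k^2$ against a product of two linear probes, which vanish because each $S$ is centered and independent of $\xi_k$. The other thing to check is that the chosen $K_{\delta,\eta}$ dominates each probe norm, including the factors of $2$.
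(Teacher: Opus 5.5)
Your proposal is correct and is the paper's argument: pair $X^2-Y^2$ with the probes $T_i$, $S_iS_j$, $S_\eta S_j$ and $T_\eta$, evaluate each pairing exactly via the expansion in Remark~\ref{rem:probe-extraction}, and bound it by $\norm{P}_{L^\infty}\eps$. Your constant bookkeeping is slightly more explicit than the paper's (for instance, you spell out that the diagonal case uses $2/\delta\le 4/\delta^2$), but the route is the same.
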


\begin{proof}
For $i=j\ge2$, we use the quadratic probe $T_i$ for $\xi_i$ to estimate
\[
|a_i^2-b_i^2|
=\bigl|\E[(X^2-Y^2)T_i]\bigr|
\le \norm{T_i}_{L^\infty}\eps
\le K_{\delta,\eta}\eps.
\]
For distinct $i,j\ge2$, we use the linear probes $S_i,S_j$ to estimate
\[
2|a_i a_j-b_i b_j|
=\bigl|\E[(X^2-Y^2)S_iS_j]\bigr|
\le \norm{S_i}_{L^\infty}\norm{S_j}_{L^\infty}\eps
\le K_{\delta,\eta}\eps.
\]
This proves \rm(1).

For \rm(2), we use the product $S_\eta S_j$.  Exactly as in the proof of
Lemma~\ref{lem:head-sign}, all the terms vanish except one, so
\[
2|a_1a_j-b_1b_j|
=\bigl|\E[(X^2-Y^2)S_\eta S_j]\bigr|
\le \norm{S_\eta}_{L^\infty}\norm{S_j}_{L^\infty}\eps
\le K_{\delta,\eta}\eps.
\]
Finally, if $\eta^2\not\equiv1$, we use $T_\eta$ to estimate
\[
|a_1^2-b_1^2|
=\bigl|\E[(X^2-Y^2)T_\eta]\bigr|
\le \norm{T_\eta}_{L^\infty}\eps
\le K_{\delta,\eta}\eps.
\]
\end{proof}

The following lemma conveniently packages the above quantities into a single matrix.
\begin{lemma}\label{lem:rank-one}
For $x,y\in\R^m$, let $A(x,y)_{ij}=x_ix_j-y_iy_j$ be the difference matrix between $x$ and $y$. Then
\[
\norm{x-y}_2^2\,\norm{x+y}_2^2\le 2\norm{A(x,y)}_F^2.
\]
\end{lemma}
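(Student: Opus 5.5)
The idea is to factor the difference matrix. Set $u:=x-y$ and $v:=x+y$, so that $x=(u+v)/2$ and $y=(v-u)/2$. Then
\[
x_ix_j-y_iy_j=\tfrac14\bigl[(u_i+v_i)(u_j+v_j)-(v_i-u_i)(v_j-u_j)\bigr]=\tfrac12\bigl(u_iv_j+v_iu_j\bigr),
\]
which gives $A(x,y)=\tfrac12(uv^\top+vu^\top)$, the symmetrized outer product of $u$ and $v$. The proof then amounts to computing a Frobenius norm.

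Using $\norm{M}_F^2=\operatorname{tr}(M^\top M)$, we get
\[
\norm{uv^\top+vu^\top}_F^2=\norm{uv^\top}_F^2+\norm{vu^\top}_F^2+2\operatorname{tr}(vu^\top vu^\top)=2\norm{u}_2^2\norm{v}_2^2+2\ip{u}{v}^2 .
\]
Therefore
\[
\norm{A(x,y)}_F^2=\tfrac14\bigl(2\norm{u}_2^2\norm{v}_2^2+2\ip{u}{v}^2\bigr)\ge\tfrac12\norm{u}_2^2\norm{v}_2^2,
\]
and multiplying by $2$ yields $\norm{x-y}_2^2\norm{x+y}_2^2\le 2\norm{A(x,y)}_F^2$. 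This is the claimed inequality, and it is in fact slightly wasteful, since the term $\ip{u}{v}^2$ is simply dropped.

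No step presents a real obstacle. The only point requiring care is the cross term $\operatorname{tr}(vu^\top vu^\top)=\ip{u}{v}^2$. It is nonnegative, so it can be discarded, and this is what makes the lower bound work regardless of any relation between $u$ and $v$.
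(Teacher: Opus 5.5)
Your proof is correct and is essentially the paper's argument. The paper expands both sides in terms of $s=\norm{x}_2^2$, $t=\norm{y}_2^2$, $c=\ip{x}{y}$ and finds that $2\norm{A(x,y)}_F^2$ exceeds $\norm{x-y}_2^2\norm{x+y}_2^2$ by exactly $(s-t)^2$; your factorization $A(x,y)=\tfrac12(uv^\top+vu^\top)$ in the coordinates $u=x-y$, $v=x+y$ exhibits the same slack, since the term you discard is $\ip{u}{v}^2=(\norm{x}_2^2-\norm{y}_2^2)^2$.
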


\begin{proof}
Writing $s=\norm{x}_2^2$, $t=\norm{y}_2^2$ and $c=\ip{x}{y}$, the left-hand
side equals $(s+t)^2-4c^2$, while the right-hand side is
$2(s^2+t^2-2c^2)$.  Thus, the right-hand side has an extra factor of $(s-t)^2$, which is nonnegative.
\end{proof}

\begin{proposition}[Concentrated case]\label{prop:concentrated}
If there exists $\tau\in\{\pm 1\}$ such that
\[
\norm{(a-\tau b)\mathbf 1_{H^c}}_2\le \frac{\theta}{32},
\]
then there exists a constant $c_{\mathrm{line}}(\delta,\eta)>0$ such that
\[
\eps\ge c_{\mathrm{line}}(\delta,\eta).
\]
\end{proposition}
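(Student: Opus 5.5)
We argue by contradiction against orthogonality. Since $\norm{a}_2=\norm{b}_2=1$ and $\ip{a}{b}=0$, we have $\norm{a-\tau b}_2^2=2$ for either sign. In the concentrated case the tail contributes at most $(\theta/32)^2$ to this, so the head must carry almost all of the discrepancy:
\[
\norm{(a-\tau b)\mathbf 1_H}_2^2\ge 2-\theta^2/2^{10}\ge 1.
\]
The plan is to show that if $\eps$ is small, then Proposition~\ref{prop:head-entry} forces $a\mathbf 1_H\approx\pm b\mathbf 1_H$. If the resulting sign is $\tau$, this contradicts the display above. If it is $-\tau$, we derive a contradiction from a different quantity. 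The bound on $\eps$ will depend only on $\delta,\eta$, through $d^\ast=\#H$.

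\emph{Step 1 (head matrix).} Set $x=a\mathbf 1_H$ and $y=b\mathbf 1_H$, viewed in $\R^{d^\ast}$, and form the matrix $A(x,y)$. Proposition~\ref{prop:head-entry} bounds every entry by $K_{\delta,\eta}\eps$, except the $(1,1)$ entry when $\eta$ is Rademacher. When $\eta^2\not\equiv1$, we get $\norm{A}_F\le d^\ast K_{\delta,\eta}\eps$, and Lemma~\ref{lem:rank-one} gives
\[
\min\bigl(\norm{x-y}_2,\norm{x+y}_2\bigr)^2\le\norm{x-y}_2\norm{x+y}_2\le\sqrt2\,d^\ast K_{\delta,\eta}\eps.
\]
So if $\eps$ is below a threshold, there is $\sigma$ with $\norm{x-\sigma y}_2$ small.

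\emph{Step 2 (fixing the sign).} If $\sigma=\tau$, then $\norm{a-\tau b}_2^2\le\norm{x-\tau y}^2+\theta^2/2^{10}<2$ for small $\eps$, which is a contradiction. If $\sigma=-\tau$, we use orthogonality:
\[
0=\ip{a}{b}=\ip{x}{y}+\ip{a\mathbf 1_{H^c}}{b\mathbf 1_{H^c}}.
\]
Here $\ip{x}{y}\approx-\tau\norm{x}^2$, and $\ip{a\mathbf 1_{H^c}}{b\mathbf 1_{H^c}}\approx\tau\norm{a\mathbf 1_{H^c}}^2$ up to $O(\theta)$. Hence $\norm{x}^2\approx\norm{a\mathbf 1_{H^c}}^2\approx 1/2$, so the tail carries mass about $1/2$ with $b\approx\tau a$ there, while on the head $b\approx-\tau a$. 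In this configuration $X$ is close to $U+V$ and $Y$ is close to $\tau(V-U)$, where $U$ is the head part and $V$ the tail part. Then
\[
X^2-Y^2\approx 4UV,\qquad \E\abs{UV}=\E\abs{U}\,\E\abs{V}
\]
by independence. Lemma~\ref{lem:l1lower} gives $\E\abs{V}\gtrsim\delta\norm{a\mathbf 1_{H^c}}_2$, and the same lemma, with $\E\abs{\eta-\widetilde\eta}$ handling the $\eta$ coordinate, which is why $\kappa_{\delta,\eta}$ appears, gives $\E\abs{U}\gtrsim\kappa_{\delta,\eta}\norm{x}$. So $\eps\gtrsim\delta\kappa_{\delta,\eta}\cdot\tfrac12-O(\text{errors})$, and the constant $\theta=\min(\delta^2/8,\delta\kappa)$ is designed to dominate the errors, which are of size $\theta/32$ in $L^2$ and enter $\eps$ through Cauchy--Schwarz. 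This yields $\eps\ge c\,\theta$.

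\emph{Step 3 (Rademacher $\eta$).} When $\eta^2\equiv1$ we lose the $(1,1)$ entry. If some head index $j\ge2$ has $\abs{a_j}$ or $\abs{b_j}$ at least a fixed threshold, parts (1) and (2) of Proposition~\ref{prop:head-entry} pin down $a_1=\pm b_1$ with the same sign, by the argument of Step~4 of Lemma~\ref{lem:head-sign} made quantitative via division by that entry. Otherwise the good head coordinates are negligible, and the head is essentially $a_1\eta,b_1\eta$ with $\abs{X}\approx\abs{a_1+V}$ and $\abs{Y}\approx\abs{b_1+\tau V}$. In that case the same product computation, $X^2-Y^2\approx(a_1^2-b_1^2)+2(a_1-\tau b_1)\eta V$, yields $\eps$ bounded below unless $a_1=\tau b_1$ approximately, which again contradicts $\norm{a-\tau b}^2=2$. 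I expect this case bookkeeping to be the main obstacle: keeping the errors uniform in $d^\ast$ and splitting the thresholds so that the division step is quantitative. I would take the threshold for head entries to be a power of $\eps$ and optimize at the end, accepting a crude $c_{\mathrm{line}}$.
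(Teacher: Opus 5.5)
Your proposal is correct and follows essentially the paper's own route. You control the head difference matrix via Proposition~\ref{prop:head-entry} and Lemma~\ref{lem:rank-one}, including the quantitative division step for a Rademacher $\eta$ with nonempty good head. You use $\norm{(a-\tau b)\mathbf 1_H}_2\ge1$ to force $b\approx-\tau a$ on the head, then bound $\eps$ below by the independent product of the head of $X-\tau Y$ and the tail of $X+\tau Y$ (your $4UV$ is the paper's $PS$). The case $H=\{1\}$ with Rademacher $\eta$ is handled, as in the paper, by conditioning on $\eta$.

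Two small points. In Step~3 you should state why the tail is non-negligible. With $u=a\mathbf 1_{H^c}$, $v=b\mathbf 1_{H^c}$ and $\rho=\theta/32$, the paper gets this from $\abs{a_1^2-b_1^2}=\abs{\ip{u-\tau v}{u+\tau v}}\le\sqrt2\,\rho$, which gives $\abs{a_1+\tau b_1}\le\sqrt2\,\rho$ and hence $\norm{u+\tau v}_2\ge1$. Also, your power-of-$\eps$ threshold is unnecessary, since every good index in $H$ already satisfies $\max(\abs{a_j},\abs{b_j})>\lambda$.
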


\begin{proof}
We distinguish two cases.
\vspace{0.5em}

\noindent\emph{Case 1: $\eta$ is not Rademacher or $H\cap\{2,3,\dots\}$ is
nonempty.}
In this case, the head coefficients are linked by
Proposition~\ref{prop:head-entry}. 
In particular, by direct calculation and an argument similar to Lemma~\ref{lem:head-sign} in the Rademacher case, 
there exists a sign, which must be the same $\tau$, such that the
difference matrix of $a\mathbf 1_H$ and $\tau b\mathbf 1_H$ has all
entries bounded by $K_{\delta,\eta}\eps$. Thus, applying Lemma~\ref{lem:rank-one} with
\[
x:=a\mathbf 1_H,
\qquad
y:=\tau b\mathbf 1_H,
\]
we obtain 
\[
\norm{x-y}_2^2\,\norm{x+y}_2^2
\le 2\#H^2 K_{\delta,\eta}^2\eps^2.
\]
Since $(a-\tau b)\mathbf{1}_{H^c}$ has norm at most $\theta/32$ and
$\norm{a-\tau b}_2^2=2$ by orthogonality between $a$ and $b$, we have
\[
\norm{x-y}_2^2
=2-\norm{(a-\tau b)\mathbf 1_{H^c}}_2^2
\ge 1.
\]
	Hence,
	\[
	\norm{x+y}_2\le \sqrt{2}(\#H) K_{\delta,\eta}\eps
	\le \sqrt{2}d^*(\delta,\eta)K_{\delta,\eta}\eps.
	\]
	Set
	\[
	\alpha:=\norm{(a+\tau b)\mathbf 1_H}_2,
	\qquad
	\beta:=\norm{(a-\tau b)\mathbf 1_{H^c}}_2,
	\]
	and note that
	\[
	\alpha\le \sqrt2\,d^*(\delta,\eta)K_{\delta,\eta}\eps,
	\qquad
	\beta\le \theta/32.
	\]
	We now describe the gluing step.  Write
	\[
	p:=(a-\tau b)\mathbf 1_H,
	\qquad
	q:=(a-\tau b)\mathbf 1_{H^c},
	\]
	\[
	r:=(a+\tau b)\mathbf 1_H,
	\qquad
	s:=(a+\tau b)\mathbf 1_{H^c},
	\]
	and let
	\[
	P:=p_1\eta+\sum_{i\in H\cap\{2,3,\dots\}} p_i\xi_i,
	\qquad
	Q:=\sum_{i\notin H} q_i\xi_i,
	\]
	\[
	R:=r_1\eta+\sum_{i\in H\cap\{2,3,\dots\}} r_i\xi_i,
	\qquad
	S:=\sum_{i\notin H} s_i\xi_i.
	\]
	We may decompose
	\[
	X-\tau Y=P+Q,
	\qquad
	X+\tau Y=R+S,
	\]
	so that
	\[
	X^2-Y^2
	=(X-\tau Y)(X+\tau Y)
	=(P+Q)(R+S).
	\]
	Since the supports of $p,r$ and $q,s$ are disjoint, the pair $(P,R)$ is
	independent of the pair $(Q,S)$. In particular, $P$ and $S$ are independent.
	In view of the inequality
	\[
	\abs{PS}\le \abs{X^2-Y^2}+\abs{PR}+\abs{QS}+\abs{QR},
	\]
	 we deduce that
	\begin{equation}
    \label{eq:aux_conc_branch}
	\E[\abs{P}]\,\E[\abs{S}]
	=\E[\abs{PS}]
	\le
	\eps+\E[\abs{PR}]+\E[\abs{QS}]+\E[\abs{QR}].
	\end{equation}
	By Cauchy--Schwarz, we also have
	\[
	\E[\abs{PR}]\le \norm{P}_{L^2}\norm{R}_{L^2},
	\qquad
	\E[\abs{QS}]\le \norm{Q}_{L^2}\norm{S}_{L^2},
	\qquad
	\E[\abs{QR}]\le \norm{Q}_{L^2}\norm{R}_{L^2}.
	\]
	Moreover, we have the identities
	\[
	\norm{P}_{L^2}=\norm{p}_2=\sqrt{2-\beta^2},
	\qquad
	\norm{R}_{L^2}=\norm{r}_2=\alpha,
	\]
	\[
	\norm{Q}_{L^2}=\norm{q}_2=\beta,
	\qquad
	\norm{S}_{L^2}=\norm{s}_2=\sqrt{2-\alpha^2}.
	\]
	Now assume, for the sake of contradiction, that $\eps<c_{\mathrm{line}}(\delta,\eta)$, where
	$c_{\mathrm{line}}(\delta,\eta)$ is chosen so small that
	\[
	\sqrt2\,d^*(\delta,\eta)K_{\delta,\eta}c_{\mathrm{line}}(\delta,\eta)\le1.
	\]
	Under these assumptions, we may conclude that $\alpha\le1$ and
	\[
    1\le \min\{\|P\|_{L^2},\|S\|_{L^2}\}\le \max\{\|P\|_{L^2},\|S\|_{L^2}\} 
	\le \sqrt{2}.
	\]
	Consequently, from \eqref{eq:aux_conc_branch} we may bound
	\begin{equation}
    \label{eq:aux2_conc_branch}
	\E[\abs{P}]\,\E[\abs{S}]
	\le
	\eps+\sqrt2\,\alpha+\sqrt2\,\beta+\alpha\beta.
	\end{equation}
	To lower bound the left-hand side of \eqref{eq:aux2_conc_branch}, we decompose $P$ as 
	\[
	P=p_1\eta+G,
	\qquad \text{where} \quad 
	G:=\sum_{i\in H\cap\{2,3,\dots\}} p_i\xi_i.
	\]
	Note that either $\abs{p_1}\ge \norm{p}_2/\sqrt2$ or $\norm{G}_{L^2}\ge \norm{p}_2/\sqrt2$.
	In the first case, we apply Jensen's inequality to estimate
	\[
	\E[\abs{P}]
	=\E\bigl[\abs{p_1\eta+G}\bigr]
	\ge
	\abs{p_1}\,\E[\abs{\eta}]
	\ge
	\kappa_{\delta,\eta}\norm{p}_2.
	\]
	In the second case, we let $G'$ be an independent copy of $G$ and compute that
	\[
	2\E[\abs{P}]
	=\E\bigl[\abs{p_1\eta+G}+\abs{p_1\eta+G'}\bigr]
	\ge \E[\abs{G-G'}].
	\]
	Then, we write
	\[
	G-G'
	=
	\sum_{i\in H\cap\{2,3,\dots\}} p_i(\xi_i-\xi_i'),
	\]
	where $(\xi_i')$ is an independent copy of $(\xi_i)$.  The variables
	$\xi_i-\xi_i'$ are independent, centered, satisfy
	\[
	\norm{\xi_i-\xi_i'}_{L^2}=\sqrt2,
	\qquad
	\norm{\xi_i-\xi_i'}_{L^1}\ge \delta,
	\]
	and therefore also satisfy
	\[
	\norm{\xi_i-\xi_i'}_{L^1}
	\ge \frac{\delta}{\sqrt2}\norm{\xi_i-\xi_i'}_{L^2}.
	\]
	Applying Lemma~\ref{lem:l1lower}, we may bound
	\[
	\E[\abs{G-G'}]
	\ge
	\frac{\delta}{2\sqrt2}
	\Bigl(\sum_{i\in H\cap\{2,3,\dots\}} p_i^2
	\norm{\xi_i-\xi_i'}_{L^2}^2\Bigr)^{1/2}
	=\frac{\delta}{2}\norm{G}_{L^2}
	\ge 2\kappa_{\delta,\eta}\norm{p}_2.
	\]
	In both cases, we obtain
	\[
	\E[\abs{P}]
	\ge \kappa_{\delta,\eta}\norm{p}_2
	\ge \kappa_{\delta,\eta}.
	\]
	Since $S$ is supported on $H^c$, we may apply
	Lemma~\ref{lem:l1lower} to obtain
	\[
	\E[\abs{S}]
	\ge \frac{\delta}{2\sqrt2}\norm{s}_2
	\ge \frac{\delta}{2\sqrt2}.
	\]
	Combining the last two displays with the inequality in \eqref{eq:aux2_conc_branch}, we see that
	\[
	\frac{\delta\kappa_{\delta,\eta}}{2\sqrt2}
	\le
	\eps+\sqrt2\,\alpha+\sqrt2\,\beta+\alpha\beta.
	\]
	Using the bounds for $\alpha$ and $\beta$, we infer that
	\[
	\frac{\delta\kappa_{\delta,\eta}}{2\sqrt2}
	\le
	\Bigl(1+2d^*(\delta,\eta)K_{\delta,\eta}
	+\frac{\theta}{16}d^*(\delta,\eta)K_{\delta,\eta}\Bigr)\eps
	+\frac{\sqrt2\,\theta}{32}.
	\]
	By the choice of $\theta$, we deduce the inequality
	\[
	\frac{\sqrt2\,\theta}{32}
	\le \frac{\delta\kappa_{\delta,\eta}}{8\sqrt2}.
	\]
	After decreasing $c_{\mathrm{line}}(\delta,\eta)$ if necessary so that
	\[
	\Bigl(1+2d^*(\delta,\eta)K_{\delta,\eta}
	+\frac{\theta}{16}d^*(\delta,\eta)K_{\delta,\eta}\Bigr)
	c_{\mathrm{line}}(\delta,\eta)
	\le \frac{\delta\kappa_{\delta,\eta}}{8\sqrt2}
	\]
	(which is strictly smaller than
$\delta\kappa_{\delta,\eta}/(2\sqrt2)$ whenever
	$\eps<c_{\mathrm{line}}(\delta,\eta)$), we reach a contradiction. We conclude, therefore, that
	$\eps\ge c_{\mathrm{line}}(\delta,\eta)$.
\vspace{0.5em}

\noindent\emph{Case 2: $\eta$ is Rademacher and $H=\{1\}$.}
Write
\[
u:=a\mathbf 1_{H^c},
\qquad
v:=b\mathbf 1_{H^c},
\qquad
\rho:=\theta/32,
\]
and let
\[
U:=\sum_{i\ge2} a_i\xi_i,
\qquad
V:=\sum_{i\ge2} b_i\xi_i.
\]
Since $H=\{1\}$, every coordinate entering $U$ and $V$ is good.  The
 assumption of being concentrated states precisely that
\[
\norm{u-\tau v}_2=\norm{U-\tau V}_{L^2}\le \rho.
\]
Set
\[
R:=U-\tau V,
\qquad
S:=U+\tau V,
\qquad
\alpha:=a_1-\tau b_1,
\qquad
\beta:=a_1+\tau b_1
\]
and decompose
\[
X-\tau Y=\alpha\eta+R,
\qquad
X+\tau Y=\beta\eta+S.
\]
Since $\norm{a}_2=\norm{b}_2=1$ and $\ip{a}{b}=0$, we have
\[
\norm{a-\tau b}_2^2=\norm{a+\tau b}_2^2=2.
\]
Therefore,
\[
\alpha^2+\norm{R}_{L^2}^2
=\alpha^2+\norm{u-\tau v}_2^2
=2,
\qquad
\beta^2+\norm{S}_{L^2}^2
=\beta^2+\norm{u+\tau v}_2^2
=2.
\]
In particular,
\[
|\alpha|\ge \sqrt{2-\rho^2}\ge1.
\]
Our next objective is to estimate $\beta$. To this end, we notice that
\[
|\beta|\le |\alpha\beta|
=|a_1^2-b_1^2|
=\bigl|\norm{v}_2^2-\norm{u}_2^2\bigr|
=\bigl|\langle u-\tau v,u+\tau v\rangle\bigr|.
\]
Thus, it follows that
\[
|\beta|
\le \norm{u-\tau v}_2\norm{u+\tau v}_2
\le \sqrt2\,\rho.
\]
Hence, $\norm{S}_{L^2}^2 = \|u+\tau v\|_2^2\ge1$ and Lemma~\ref{lem:l1lower} gives
\[
\norm{S}_{L^1}
=\Bigl\|\sum_{i\ge2}(a_i+\tau b_i)\xi_i\Bigr\|_{L^1}
\ge \frac{\delta}{2\sqrt2}\norm{u+\tau v}_2
\ge \frac{\delta}{2\sqrt2}.
\]
On the other hand, we have
\[
\norm{R}_{L^1}\le \norm{R}_{L^2}\le \rho.
\]
Now, we condition on the $\sigma$-algebra generated by the good coordinates.  Since
$\eta$ is an independent Rademacher variable, we have
\[
X^2-Y^2
=(X-\tau Y)(X+\tau Y)
=(\alpha\eta+R)(\beta\eta+S)
=\alpha\beta+RS+\eta(\alpha S+\beta R).
\]
Therefore,
\begin{align*}
    \E\bigl[\abs{X^2-Y^2}\mid R,S\bigr]
&=\frac12\Bigl(
\abs{\alpha\beta+RS+\alpha S+\beta R}
+
\abs{\alpha\beta+RS-\alpha S-\beta R}
\Bigr)
\\
&=\max\bigl(\abs{\alpha\beta+RS},\abs{\alpha S+\beta R}\bigr)
\ge \abs{\alpha S+\beta R}.
\end{align*}
Taking expectations on both sides, by the law of iterated expectation, we have 
\begin{equation*}
\E[|X^2-Y^2|]\ge \|\alpha S+\beta R\|_{L^1}\ge  |\alpha|\|S\|_{L^1}-|\beta|\|R\|_{L^1}.
\end{equation*}
Recalling the definition of $\varepsilon$ and the estimates for the $L^1$ norms of $S$ and $R$, we conclude that
\[
\eps
\ge |\alpha|\norm{S}_{L^1}-|\beta|\norm{R}_{L^1}
\ge \frac{\delta}{2\sqrt2}-\sqrt2\,\rho^2.
\]
Since $\rho=\theta/32\le \delta^2/256$ and $0<\delta\le1$, the right-hand side
is bounded below by a positive constant depending only on $\delta$ (for
instance by $\delta/4$).  After decreasing
$c_{\mathrm{line}}(\delta,\eta)$ if necessary, this proves Case~2.
\end{proof}

\subsection{The diffuse case}
We now move to the complementary regime.  Here, both $a-b$ and $a+b$ retain a
definite amount of $\ell^2$ mass on the tail, and every tail coordinate is
individually small.  This is exactly the setting in which an 
anticoncentration estimate may be used.
We set
\[
c:=\frac{(a-b)\mathbf 1_{H^c}}{\sqrt2},
\qquad
d:=\frac{(a+b)\mathbf 1_{H^c}}{\sqrt2}.
\]
Since $1\in H$, the vectors $c$ and $d$ are supported only on good
coordinates.  In the diffuse case, both have $\ell^2$ norm at least
$\sqrt{8s_0}$, while each coordinate is bounded by $\sqrt2\,\lambda$.
Our main anticoncentration lemma is as follows.

\begin{lemma}\label{lem:cross}
There exists $r_*=r_*(\delta,\eta)>0$ such that, in the diffuse case,
\[
\Prob\bigl(\min(\abs{X_c+u},\abs{X_d+v})>r_*\bigr)\ge \frac14, \quad 
\qquad(u,v\in\R).
\]
\end{lemma}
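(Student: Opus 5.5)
The plan is to reduce Lemma~\ref{lem:cross} to a one-dimensional small-ball estimate for each of $X_c$ and $X_d$ separately, and then combine the two by a union bound. Concretely, I will show that there is $r_*=r_*(\delta,\eta)>0$ such that
\[
\sup_{u\in\R}\Prob\bigl(\abs{X_c+u}\le r_*\bigr)\le \tfrac38,
\qquad
\sup_{v\in\R}\Prob\bigl(\abs{X_d+v}\le r_*\bigr)\le \tfrac38,
\]
whenever the coefficient vector satisfies $\norm{c}_2\ge\sqrt{8s_0}$ and $\norm{c}_\infty\le\sqrt2\lambda$, and likewise for $d$. The union bound then gives
\[
\Prob\bigl(\min(\abs{X_c+u},\abs{X_d+v})>r_*\bigr)\ge 1-\tfrac34=\tfrac14 .
\]
This route never uses the joint structure of $(X_c,X_d)$, which is what makes it attractive.

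\emph{Step 1: single-variable anticoncentration.} I first show that each good coordinate is uniformly non-degenerate at a fixed scale. The target is: with $g:=\delta/8$ and $p:=\delta^2/2^{8}$, every centered $\xi$ with $\norm{\xi}_{L^2}=1$ and $\norm{\xi}_{L^1}\ge\delta$ satisfies
\[
\sup_m\Prob(\abs{\xi-m}\le g)\le 1-p.
\]
Suppose instead that $\Prob(\abs{\xi-m}\le g)>1-p$ for some $m$.
\begin{itemize}
\item A lower bound: Jensen gives $\E\abs{\xi-m}\ge\abs{m}$, and trivially $\E\abs{\xi-m}\ge\E\abs{\xi}-\abs m$. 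Splitting into $\abs{m}\ge\delta/2$ and $\abs{m}<\delta/2$ yields $\E\abs{\xi-m}\ge\delta/2$.
\item An upper bound: by Cauchy--Schwarz on the complement event, $\E\abs{\xi-m}\le g+\sqrt p\,(1+\abs m)$.
\item Since $\abs m\le\E\abs{\xi-m}$ controls $\abs m$, these two bounds are incompatible for the stated choice of $g,p$ (up to adjusting absolute constants).
\end{itemize}
By scaling, $\sup_m\Prob(\abs{c_i\xi_i-m}\le \abs{c_i}g)\le 1-p$ for every tail coordinate.

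\emph{Step 2: summation via a Kolmogorov--Rogozin/Sperner inequality.} This is the main step, and also the main obstacle. I would invoke the Kolmogorov--Rogozin inequality for the concentration function $Q(Z,L):=\sup_u\Prob(\abs{Z-u}\le L)$ of a sum of independent variables:
\[
Q\Bigl(\sum_i Z_i,L\Bigr)\le C_0\,L\Bigl(\sum_i \lambda_i^2\bigl(1-Q(Z_i,\lambda_i)\bigr)\Bigr)^{-1/2}
\qquad(0<\lambda_i\le L).
\]
I apply it with $Z_i=c_i\xi_i$ and $\lambda_i=\abs{c_i}g$. The constraint $\lambda_i\le L$ requires $L\ge\sqrt2\lambda g$, which is exactly where the smallness of the tail coefficients enters. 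Step~1 then gives
\[
Q(X_c,L)\le \frac{C_0 L}{g\sqrt p\,\norm{c}_2}\le \frac{C_0L}{g\sqrt{8ps_0}}.
\]
The remaining issue is to check that one can choose $L=r_*\ge\sqrt2\lambda g$ with the right-hand side at most $3/8$. This needs $\lambda\ll \sqrt{p s_0}$, and it holds with room to spare by the choices $\lambda\le\delta\theta/2^{17}$ and $\sqrt{s_0}=\theta/2^7$. I would set $r_*:=\sqrt2\lambda g$ and verify the numerology, with the expected payoff that $r_*$ is explicit in $\delta,\theta$.

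If one prefers a self-contained (and formalization-friendly) replacement for Rogozin's inequality, the fallback is the Sperner route. Represent each $\xi_i$, via Step~1, as a mixture in which, with probability at least $p$, it takes one of two values separated by at least $g$, the choice between them being a fair coin. Conditioning on everything except these coins reduces $X_c$ to a shifted Bernoulli sum $\sum_{i\in I}c_i'\epsilon_i$, where the conditional coefficients satisfy $\abs{c_i'}\gtrsim\abs{c_i}g$ for $i\in I$. A Chernoff bound shows that, with high probability, $\sum_{i\in I}c_i^2\gtrsim p\norm{c}_2^2$. Dyadically grouping coefficients and applying the Erdős--Littlewood--Offord (Sperner antichain) bound $O(k^{-1/2})$ on the most populous scale then gives the same small-ball estimate. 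The delicate point in this fallback is handling coefficients spread across many dyadic scales, which is precisely what Rogozin's inequality packages automatically.
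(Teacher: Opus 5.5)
Your reduction to a one-variable bound $\sup_t\Prob(\abs{X_e-t}\le r_*)\le 3/8$, followed by a union bound, is exactly the paper's. Your Step~1 is also correct: with $g=\delta/8$ and $p=\delta^2/2^8$ the two bounds on $\E\abs{\xi-m}$ are incompatible, using $\delta\le 1/2$.

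From there you take a genuinely different route. You feed per-coordinate nondegeneracy into the Kolmogorov--Rogozin inequality as a black box. The paper instead proves the small-ball bound by hand: it symmetrizes $X_e-X_e'$, groups tail coordinates into $M$ disjoint blocks of comparable \emph{energy}, uses Lemma~\ref{lem:l1lower} plus a second-moment argument to make each symmetric block sum active with probability $\ge\delta^2/128$, uses Chebyshev to get at least $256$ active blocks, and applies Sperner to the block signs at a single scale.

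\textbf{The gap is the closing numerology of your Step~2.} The lemma concerns the diffuse case with the parameters already fixed in the setup, $\lambda=\delta\theta/2^{17}$ and $s_0=\theta^2/2^{14}$, so there is no room to absorb an unknown absolute constant. Your own computation gives $Q(X_c,\sqrt2\lambda g)\le C_0\lambda/(2\sqrt{p s_0})\le C_0/2^{7}$. The lemma therefore follows only if $C_0\le 48$, where $C_0$ is the constant in Rogozin's inequality. You never say what $C_0$ is, so ``it holds with room to spare'' is an unverified assumption about that constant, not a consequence of the parameter choices. As written, the argument is conditional.

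To repair it, you can quote a version of the inequality with an explicit constant and check it against the threshold. It helps to sharpen Step~1 first: with $g=\delta/8$, your two bounds already permit $p=\delta^2/8$, which relaxes the requirement to $C_0\le 2^{10}\cdot\tfrac38/\sqrt2\approx 271$. Alternatively, you can shrink $\lambda/\sqrt{s_0}$ in the setup. That is harmless for Theorem~\ref{thm:main}, since $\lambda$ only enters through $d^*$, but it proves a variant rather than the stated lemma.

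Your Sperner fallback does not close the gap as sketched. Picking the most populous (or most energetic) dyadic scale fails when the tail energy is spread over $K$ scales. A single scale then carries energy about $8s_0/K$, so its Erd\H{o}s--Littlewood--Offord bound at the fixed radius $r_*$ is of order $r_*\sqrt K/\sqrt{s_0}$ up to $\delta$-factors. Since coefficients can be arbitrarily small, $K$ is unbounded. The missing idea is the paper's: group coordinates by energy rather than by coefficient size, and symmetrize. Each block then becomes one symmetric variable living at the common scale $\sqrt{\eta}$, and Sperner applies directly to its sign. This is essentially Kolmogorov's original combinatorial proof, specialized to one scale.

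As for what each route buys: once the constant is pinned down, yours is shorter and gives more, namely a small-ball bound decaying linearly in $r$ for all $r\ge\sqrt2\lambda g$. The paper's argument is fully elementary and explicit, and that is what the Lean formalization and the explicit constant in Theorem~\ref{thm:lean} depend on.
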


\begin{proof}
We first prove a small-ball estimate.  Let $e\in \ell^2(\N)$ be
supported on $H^c$ and satisfy
\[
\norm{e}_2^2\ge 8s_0
\qquad \text{and} \quad
\sup_{i\ge2}\abs{e_i}\le \sqrt2\,\lambda.
\]
	We claim that there exists $r_0=r_0(\delta,\eta)>0$ such that
\begin{equation}\label{eq:onevar-smallball}
\Prob\bigl(\abs{X_e-t}\le r_0\bigr)\le \frac38
\qquad(t\in\R).
\end{equation}
Once this is proved, the lemma follows immediately by applying
\eqref{eq:onevar-smallball} to $e=c$ with $t=-u$ and to $e=d$ with $t=-v$.
Indeed, in the diffuse case both $c$ and $d$ satisfy the displayed
hypotheses, so
\[
\Prob\bigl(\abs{X_c+u}\le r_0\bigr)\le \frac38,
\qquad
\Prob\bigl(\abs{X_d+v}\le r_0\bigr)\le \frac38.
\]
By the union bound, we have
\[
\Prob\bigl(\abs{X_c+u}\le r_0\ \text{or}\ \abs{X_d+v}\le r_0\bigr)
\le \frac34,
\]
and therefore
\[
\Prob\bigl(\min(\abs{X_c+u},\abs{X_d+v})>r_0\bigr)\ge \frac14.
\]
Thus, it remains to establish the claim in \eqref{eq:onevar-smallball}. 
Set
\[
M:=\Bigl\lceil 2^{16}\delta^{-2}\Bigr\rceil,
\qquad
\eta:=\frac{s_0}{2M},
\qquad
r_0:=\frac{\delta\sqrt{\eta}}{16\sqrt{2}}.
\]
	As
	\[
	2\lambda^2
	\le \frac{\delta^2\theta^2}{2^{33}}
	\le \frac{\theta^2}{2^{15}M}
	=\eta,
	\]
	every coordinate satisfies $e_i^2\le \eta$.  Since
\[
\norm{e}_2^2\ge 8s_0>2M\eta=s_0,
\]
we may choose pairwise disjoint finite sets $G_1,\dots,G_M\subset H^c$ such
that
\begin{equation}\label{eq:block-energy}
\eta\le \sum_{i\in G_j} e_i^2\le 2\eta
\qquad(1\le j\le M).
\end{equation}
Indeed, we may construct the blocks greedily:  Suppose that $G_1,\dots,G_{j-1}$ have
already been chosen.  Their total mass is at most $2(j-1)\eta$, so the
unused part of $e$ still has mass at least
\[
8s_0-2(j-1)\eta\ge 8s_0-2M\eta=7s_0> \eta.
\]
Hence, we may choose a finite subset of the remaining support whose mass is
at least $\eta$, and by minimality its mass is at most $\eta+\sup_i e_i^2
\le 2\eta$.  This produces the next block.
 
 Let $X_e'$ be an independent copy of $X_e$, realized on a second copy of the
probability space.  For each $i\ge2$ define
\[
D_i:=\xi_i(\omega_1)-\xi_i'(\omega_2).
\]
The variables $(D_i)_{i\ge2}$ are independent, centered and symmetric,
with
\[
\norm{D_i}_{L^2}^2
=\E[(\xi_i-\xi_i')^2]
=2.
\]
Moreover, by Jensen's inequality applied conditionally on $\xi_i$, we see that
\[
\E[\abs{D_i}]
=\E\bigl[\E[\abs{\xi_i-\xi_i'}\mid \xi_i]\bigr]
\ge \E\bigl[\abs{\xi_i-\E[\xi_i']} \bigr]
=\E[\abs{\xi_i}]
\ge \delta.
\]
Hence,
\[
\E[\abs{D_i}]
\ge \frac{\delta}{\sqrt2}\norm{D_i}_{L^2}.
\]
For each block, we define
\[
Y_j:=\sum_{i\in G_j} e_iD_i,
\qquad
\sigma_j^2:=\sum_{i\in G_j} e_i^2.
\]
By \eqref{eq:block-energy}, we have
\[
\eta\le \sigma_j^2\le 2\eta.
\]
Applying Lemma~\ref{lem:l1lower} to the independent centered family
$(D_i)_{i\in G_j}$, with lower $L^1$ ratio $\delta/\sqrt2$, gives
\[
\E[\abs{Y_j}]
\ge
\frac{(\delta/\sqrt2)}{2\sqrt2}
\Bigl(\sum_{i\in G_j} e_i^2\norm{D_i}_{L^2}^2\Bigr)^{1/2}
=\frac{\delta}{4}\bigl(2\sigma_j^2\bigr)^{1/2}
=\frac{\delta}{2\sqrt{2}}\sigma_j
\ge \frac{\delta\sqrt{\eta}}{2\sqrt{2}}.
\]
Next, we set
\[
a_0:=\frac{\delta\sqrt{\eta}}{4\sqrt{2}}=4r_0,
\]
and notice that $a_0\le \E[\abs{Y_j}]/2$.  We define the event 
\[
A_j:=\{\abs{Y_j}\ge a_0\}.
\]
Since
\[
\E[Y_j^2]=2\sigma_j^2\le 4\eta,
\]
we obtain
\[
\E[\abs{Y_j}]
\le a_0+\E[\abs{Y_j}\mathbf 1_{A_j}]
\le a_0+\Prob(A_j)^{1/2}\E[Y_j^2]^{1/2}\le a_0 +\Prob(A_j)^{1/2}(4\eta)^{1/2} .
\]
As $\E[\abs{Y_j}]\ge 2a_0$, the second term on the right-hand side satisfies
\[
\Prob(A_j)^{1/2}\,(4\eta)^{1/2}\ge a_0.
\]
Therefore,
\begin{equation}\label{eq:block-active}
\Prob(A_j)\ge \frac{a_0^2}{4\eta}=\frac{\delta^2}{128}.
\end{equation}
Now, we let
\[
N:=\sum_{j=1}^M \mathbf 1_{A_j}
\]
and notice that this is a sum of independent random variables because the blocks are
disjoint.  By \eqref{eq:block-active}, we have
\[
\E[N]
=\sum_{j=1}^M \Prob(A_j)
\ge M\frac{\delta^2}{128}
\ge 512.
\]
Since $\operatorname{Var}(\mathbf 1_{A_j})\le \E[\mathbf 1_{A_j}]$, independence implies that
\[
\operatorname{Var}(N) = \sum_{j=1}^M \operatorname{Var}(\mathbf 1_{A_j})\le \sum_{j=1}^M \E[\mathbf 1_{A_j}]=  \E[N].
\]
Applying Chebyshev's inequality, we see that
\[
\Prob(N<256)
\le
\Prob\bigl(\abs{N-\E[N]}\ge \E[N]-256\bigr)
\le
\frac{\operatorname{Var}(N)}{(\E[N]-256)^2}
\le
\frac{\E[N]}{(\E[N]-256)^2}
\le \frac{512}{256^2}
<\frac1{16}.
\]
 Next, we aim to achieve a Sperner bound for the symmetrized sum.
Let
\[
U:=\sum_{j=1}^M Y_j.
\]
We claim that for every $s\in\R$, we have
\begin{equation}\label{eq:symm-smallball}
\Prob(\abs{U-s}\le 2r_0)\le \frac18.
\end{equation}
We begin by decomposing
\[
\Prob(\abs{U-s}\le 2r_0)
\le
\Prob(\abs{U-s}\le 2r_0,\ N\ge256)+\Prob(N<256).
\]
We have already bounded the second term by $1/16$, so it remains to estimate
the first one.
To achieve this, we condition on the absolute values $(\abs{Y_j})_{j=1}^M$.  Since each $Y_j$ is
symmetric, conditional on $(\abs{Y_j})_j$ the signs of the nonzero $Y_j$ are
independent Rademacher variables.  On the event $\{N\ge256\}$, let
\[
J:=\{j:\abs{Y_j}\ge a_0\},
\qquad
m:=\#J\ge256.
\]
After freezing the inactive coordinates and the magnitudes $\abs{Y_j}$, the
condition $\abs{U-s}\le 2r_0$ becomes
\[
\sum_{j\in J}\varepsilon_j w_j \in I,
\]
where $w_j:=\abs{Y_j}\ge a_0$ and $I$ is an interval of diameter
$4r_0=a_0$.  Since $a_0<2w_j$ for every $j\in J$, the family of sign patterns
for which the weighted sum falls in $I$ is an antichain in $\{\pm1\}^J$.
Indeed, if two such sign patterns were comparable in the natural subset order,
their sums would differ by at least $2w_j\ge2a_0>a_0$, which is impossible
inside an interval of diameter $a_0$.  By Sperner's theorem, the conditional
probability of this event is therefore at most
\[
\frac{\binom{m}{\lfloor m/2\rfloor}}{2^m}
\le \frac1{\sqrt m}
\le \frac1{16},
\]
because $m\ge256$.  Averaging over the conditioning yields
\[
\Prob(\abs{U-s}\le 2r_0,\ N\ge256)\le \frac1{16}.
\]
Together with the bound for $\Prob(N<256)$, this proves
\eqref{eq:symm-smallball}.

 Finally, let
\[
R:=\sum_{i\notin G_1\cup\cdots\cup G_M} e_iD_i
\]
and notice that $R$ is independent of $U$, because it is built from coordinates disjoint
from those entering the block sums.  Also, by construction, we have
\[
X_e(\omega_1)-X_e'(\omega_2)=U+R.
\]
Fix $t\in\R$.  If both $\abs{X_e(\omega_1)-t}\le r_0$ and
$\abs{X_e'(\omega_2)-t}\le r_0$, then
\[
\abs{X_e(\omega_1)-X_e'(\omega_2)}\le 2r_0.
\]
Hence, since $X_{e}'$ is an independent copy of $X_e$, we have
\[
\Prob(\abs{X_e-t}\le r_0)^2 = \Prob(\abs{X_e-t}\le r_0\cap\abs{X_e'-t}\le r_0)
\le
\Prob(\abs{X_e(\omega_1)-X_e'(\omega_2)}\le 2r_0)
=
\Prob(\abs{U+R}\le 2r_0).
\]
Conditioning on $R$ and using \eqref{eq:symm-smallball}, we deduce that
\[
\Prob(\abs{U+R}\le 2r_0)
\le
\sup_{s\in\R}\Prob(\abs{U-s}\le 2r_0)
\le \frac18.
\]
Therefore,
\[
\Prob(\abs{X_e-t}\le r_0)\le \frac1{\sqrt8}<\frac38.
\]
This proves \eqref{eq:onevar-smallball}, and with it the lemma.
\end{proof}

Once the above anticoncentration estimate is available, the lower bound for
$\eps$ follows from the same factorization of $X^2-Y^2$ that was
 used in the compactness proof.

\begin{proposition}[Diffuse branch]\label{prop:diffuse}
If the diffuse alternative of Lemma~\ref{lem:dichotomy} holds, then
\[
\eps\ge c_{\mathrm{cross}}(\delta,\eta)>0.
\]
\end{proposition}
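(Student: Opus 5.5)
We plan to use the factorization $X^2-Y^2=(X-Y)(X+Y)$ and split each factor into a head part and a tail part, so that Lemma~\ref{lem:cross} can be applied conditionally on the head. Write $X-Y=\sqrt2\,(P+X_c)$ and $X+Y=\sqrt2\,(Q+X_d)$. Here $P:=\tfrac{1}{\sqrt2}\bigl((a_1-b_1)\eta+\sum_{i\in H\cap\{2,3,\dots\}}(a_i-b_i)\xi_i\bigr)$, and $Q$ is defined analogously with $a+b$ in place of $a-b$. The tail pieces $X_c=\sum_{i\notin H}c_i\xi_i$ and $X_d=\sum_{i\notin H}d_i\xi_i$ are built from the vectors $c,d$ introduced before Lemma~\ref{lem:cross}. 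Since $1\in H$, the pair $(P,Q)$ is measurable with respect to $\eta$ and $(\xi_i)_{i\in H}$. The pair $(X_c,X_d)$ is measurable with respect to the tail coordinates $(\xi_i)_{i\notin H}$. By independence the two pairs are therefore independent, so we may condition on the head.

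The key step is to apply Lemma~\ref{lem:cross} conditionally. Fix a realization of the head and set $u:=P$, $v:=Q$; these are deterministic after conditioning. Lemma~\ref{lem:cross} holds uniformly over all $u,v\in\R$, and the diffuse alternative depends only on the coefficients, not on the head values. It therefore gives
\[
\Prob\bigl(\min(\abs{X_c+P},\abs{X_d+Q})>r_*\,\big|\,\eta,(\xi_i)_{i\in H}\bigr)\ge \tfrac14
\]
almost surely. On this event we have $\abs{X^2-Y^2}=2\abs{P+X_c}\,\abs{Q+X_d}>2r_*^2$. Taking expectations and using the law of iterated expectation, we obtain
\[
\eps=\E\abs{X^2-Y^2}\ge 2r_*^2\cdot\tfrac14=\tfrac{r_*^2}{2},
\]
so $c_{\mathrm{cross}}(\delta,\eta):=r_*(\delta,\eta)^2/2$ works.

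The only point requiring care is the conditioning step. Conditioning on the head must leave the law of the tail vector $(X_c,X_d)$ unchanged, which is exactly what independence of disjoint coordinate families provides; one can also phrase this via Fubini on a product space. In addition, the bound in Lemma~\ref{lem:cross} must be uniform in the shifts $(u,v)$, and this uniformity is part of its statement. We also need the sign normalization to be consistent: we use $c\propto a-b$ and $d\propto a+b$ with $\tau=1$, which matches both the diffuse alternative of Lemma~\ref{lem:dichotomy} and the factorization above. Given these points, the heavy lifting has already been done in Lemma~\ref{lem:cross}, and this proposition is a short Fubini argument.
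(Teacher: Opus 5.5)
Your proposal is correct and follows the paper's proof essentially verbatim: your $P,Q$ are exactly the paper's $h_-/\sqrt2$ and $h_+/\sqrt2$. As in the paper, you factor $X^2-Y^2=2(P+X_c)(Q+X_d)$, condition on the independent head, and apply Lemma~\ref{lem:cross} uniformly in the shifts. Your explicit constant $c_{\mathrm{cross}}=r_*^2/2$ is simply what the paper leaves implicit when it bounds the conditional expectation of the squared minimum from below by $r_*^2/4$.
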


\begin{proof}
Let
\[
h_\pm:=X_{(a\pm b)\mathbf 1_H}.
\]
Then
\[
X^2-Y^2
=2\bigl(X_d+h_+/\sqrt2\bigr)\bigl(X_c+h_-/\sqrt2\bigr),
\]
and therefore
\[
\abs{X^2-Y^2}
\ge
2\min\Bigl(\abs{X_d+h_+/\sqrt2},\abs{X_c+h_-/\sqrt2}\Bigr)^2.
\]
We condition on the head variables $h_\pm$ and apply Lemma~\ref{lem:cross} with
the shifts $u=h_-/\sqrt2$ and $v=h_+/\sqrt2$.  This yields a uniform positive
lower bound for the conditional expectation of the squared minimum, hence for
$\eps$.  The resulting constant depends only on $\delta$ and $\eta$.
\end{proof}

\subsection{Conclusion of the second proof}
At this point, the proof is immediate. Indeed, the above dichotomy shows that every
orthogonal pair falls into exactly one of two cases, and in each case we have
achieved a quantitative lower bound on $\eps$.

\begin{proof}[Second proof of Theorem~\ref{thm:main}]
By Proposition~\ref{prop:orth-reduction}, it is enough to prove a uniform
 lower bound for orthogonal unit vectors.  Let $a,b$ be orthogonal unit vectors and form $X$
and $Y$ as above.  Applying the dichotomy in Lemma~\ref{lem:dichotomy}, we reduce to two possible scenarios. 
If the concentrated alternative holds, then
Proposition~\ref{prop:concentrated} yields
\[
\eps\ge c_{\mathrm{line}}(\delta,\eta).
\]
If, instead, the diffuse alternative holds, then Proposition~\ref{prop:diffuse} yields
\[
\eps\ge c_{\mathrm{cross}}(\delta,\eta).
\]
Since $\eps\le 2\Delta$, we conclude that
\[
\Delta
=\norm{|X|-|Y|}_{L^2}
\ge
\frac12\min\bigl(c_{\mathrm{line}}(\delta,\eta),c_{\mathrm{cross}}(\delta,\eta)\bigr)
=:c_{\delta,\eta}>0.
\]
The orthogonal reduction then gives
\[
\min_{\sigma\in\{\pm1\}}\norm{X-\sigma Y}_{L^2}
\le C_{\delta,\eta}\norm{|X|-|Y|}_{L^2}
\]
for all $X,Y\in \overline{\Span}(\eta,\xi_i:i\ge2)$, completing the proof of
Theorem~\ref{thm:main}.
\end{proof}

\section{Comments and open problems}\label{sec:comments}

The above proofs suggest several natural directions for
further work.  Although the theorem proved here characterizes when the closed span of mean-zero independent random variables does stable phase retrieval in $L^2(\Omega;\mathbb{R})$, the mechanisms that enter
the proof are likely flexible enough to tackle some adjacent
questions. 
\begin{question}
Real phase retrieval concerns recovery from the absolute value modulo the symmetry $\{\pm1\}$.  It is
natural to ask whether there are analogous theorems for other nonlinearities modulo other symmetry groups.  
\end{question}
The qualitative arguments in this paper are likely to be useful for pursuing the above direction, since they separate the
problem into a limit identity and a geometric classification of the
corresponding support set.  We believe that there is likely a broader geometric framework where our argument can be generalized,
which we hope also covers complex phase retrieval and declipping.

Our next question concerns the extent to which the present theorem depends fundamentally on the norm under consideration.

\begin{question}
Which analogues of the present theorem remain true when
the ambient space is no longer $L^2$? In particular, is there an $L^p$-analogue of our result when $p<2$? The answer is yes when $p\geq 2$,
as an immediate consequence of the interpolation/extrapolation theory developed in \cite{FOTP}.
\end{question}
In \cite[Section 4]{FOTP} it is shown that for certain random variables $f$ on $(0,1)$, the span of independent 
copies of $f$ will do stable phase retrieval in any r.i.~function space $X$ on $(0,1)$ for which $\norm{f}_X<\infty.$
We do not know to what extent this is a general phenomenon of i.i.d.~random variables or if the norm plays a more fundamental role in the
stability problem for independent random variables.

Our final question is motivated by the many examples of sequences of functions (for example, sparse Fourier series) that behave in certain senses like independent random variables. It would be interesting to know to what extent the methods developed in this paper can be generalized to prove stable phase retrieval for such subspaces.
\begin{question}
To what extent can independence be weakened in Theorem~\ref{thm:main}?
\end{question}

\section{Discussion on the Lean verification}
We verified the statement of Theorem~\ref{thm:lean} using the Lean 4 theorem prover \cite{moura2021lean}. The formalization can be found  at {\small \url{https://github.com/jaumededios/RandomVarSPR}}. Note the slight change of notation in the indexing
  (as in Lean $\mathbb{N}=\{0,1,2,\ldots\}$) as well as the uniform constants, which we now wish to track more precisely.
\begin{theorem} \label{thm:lean}
    Let $\Omega$ be a probability space and let $\xi_0,\xi_1,\ldots, \xi_{n}, \ldots \in L^2(\Omega;\mathbb{R})$ be independent random variables satisfying
    \[
    \forall i \ge 0,\quad \|\xi_i\|_2 = 1, \quad \mathbb E[\xi_i] = 0 \quad  \text{and} \quad \E[|\xi_i|] \ge A>0,
    \]
    \[
    \forall i  \ge 1, \quad \E[|\xi_i|] \le B<1 .
    \]
    Let $X, Y \in \overline{span}(\{\xi_0,\xi_1, \dots \})\subset L^2(\Omega)$.
    Then 
    \[
    \min_{\tau = \pm 1} \|X-\tau Y\|_{L^2} \le C_{A,B}\||X|-|Y|\|_{L^2}
    \]
     for some constant $C_{A,B}$ satisfying
    \[
    C_{A,B}
    \le
    2^{64}
    \max \left(A^{-10},A^{-8}/(1-B)\right).
    \]
\end{theorem}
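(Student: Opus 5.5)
The proof of Theorem~\ref{thm:lean} will re-run the quantitative argument of Section~\ref{sec:quant-proof} with $\eta:=\xi_0$, recording every constant explicitly in terms of $A$ and $B$. The first step is to replace all dependence on $\eta$ by dependence on $A$. The hypothesis $\E|\xi_0|\ge A$ together with Lemma~\ref{lem:probes}(1) gives $\norm{S_\eta}_{L^\infty}\le 2/A$. In the quantity $\kappa_{\delta,\eta}$ we will use the bound $\E|\eta-\widetilde\eta|\ge\E|\eta|\ge A$, obtained by conditional Jensen exactly as for $D_i$ in Lemma~\ref{lem:cross}. The good coordinates $i\ge1$ satisfy $A\le\norm{\xi_i}_{L^1}\le B$, so we take $\delta:=\min(A,1-B)$. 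The probes then obey $\norm{S_i}_\infty\le 2/A$ and $\norm{T_i}_\infty\le 2/(1-B)$.

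The quadratic probe $T_\eta$ has no a priori bound when $\eta$ is close to Rademacher, and this needs separate handling. We will introduce a threshold on $\E|\eta^2-1|$. If $\E|\eta^2-1|\ge(1-B)$, or more generally above some explicit $\gamma(A,B)$, then $\norm{T_\eta}_\infty\le 2/\gamma$. Otherwise $\eta$ is $L^1$-close to a Rademacher variable. In that case we will run Case~2 of Proposition~\ref{prop:concentrated} with $\eta$ replaced by $\sgn(\eta)$ and absorb the error, which is of size $O(\E|\eta^2-1|)$ times the coefficients, into the final constant. An alternative is to avoid $T_\eta$ entirely: we would argue as in Step~4 of Lemma~\ref{lem:head-sign} when the head contains a good coordinate, and fall back on the Rademacher-type conditioning argument, which only needs $\E|\eta|\ge A$, when it does not. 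This second route is probably cleaner, and it is what lets $C_{A,B}$ depend only on $A$ and $B$. We will generalize the conditioning step in Case~2 from an exact Rademacher to a general $\eta$ by symmetrizing $\eta$ against an independent copy.

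With these constants fixed, we will trace through the proof. The head size is $d^*\lesssim\lambda^{-2}$, with $\lambda\sim\delta\theta 2^{-17}$ and $\theta\sim\delta^2$, so $\lambda^{-2}\sim 2^{34}\delta^{-6}$. Proposition~\ref{prop:concentrated} then gives $c_{\mathrm{line}}\gtrsim \delta\kappa/(d^*K)$, which is of order $\delta^{8}$ up to powers of two, with an extra $(1-B)$ coming from $K$. In the diffuse branch, Lemma~\ref{lem:cross} gives $r_*=r_0\sim\delta\sqrt{s_0/M}\sim\delta\cdot\theta\cdot\delta$. Proposition~\ref{prop:diffuse} gives $c_{\mathrm{cross}}\ge 2r_*^2/4$, which is of order $\delta^{8}$ times a power of two.

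Finally, we will combine the two branches through Proposition~\ref{prop:orth-reduction}. Its proof must be made quantitative: if $\norm{|f|-|g|}\ge c$ for orthonormal pairs, then $C\le O(1/c)$, since in \cite[Theorem~3.9]{FOTP} the constant is linear in $1/c$. The main obstacle is this bookkeeping. We need to distribute the powers of $A$ versus $1-B$ carefully, because only $T$-probes bring in $1-B$, and this has to be arranged so that the result matches $2^{64}\max(A^{-10},A^{-8}/(1-B))$. The alternative is to check the powers against the Lean development rather than re-derive them.
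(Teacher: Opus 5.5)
Your plan is essentially the paper's route: Theorem~\ref{thm:lean} comes from running the quantitative head/tail argument of Section~\ref{sec:quant-proof} with $\eta=\xi_0$ and all constants tracked, and you correctly spot that the quadratic probe $T_\eta$ has to be avoided, since its norm is governed by $\E|\xi_0^2-1|$ rather than by $A$ and $B$. Your fix works: $a_1^2-b_1^2$ can be recovered from a good head coordinate at the cost of a factor of order $\lambda^{-2}$ (comparable to $d^{\ast}$), and the case $H=\{1\}$ goes through for general $\xi_0$ using only $\E|\xi_0|\ge A$.

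Do not, however, collapse the parameters into $\delta=\min(A,1-B)$. The thresholds $\kappa,\theta,\lambda,s_0,M$ must be built from $A$ alone, with $1-B$ entering only through $K\ge 2/(1-B)$. That gives $c_{\mathrm{line}}$ of order $A^{8}\min(A^{2},1-B)$ and $c_{\mathrm{cross}}$ of order $A^{8}$ up to powers of two, and hence the stated $\max(A^{-10},A^{-8}/(1-B))$ after the orthogonal reduction, whose constant is linear in $1/c$.
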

The file {\scriptsize \texttt{showcase.lean}} provides a self-contained version of the main statement that was Lean verified, which is a direct translation of Theorem~\ref{thm:lean}. Following \cite{bertolini20262}, our goal when writing {\scriptsize \texttt{showcase.lean}} was to maximize intelligibility for mathematicians not well-versed in Lean, even
if it sometimes resulted in less idiomatic Lean code. The Lean statements were carefully reviewed and written by the authors. The Lean proof, on the other hand, was mostly translated from our natural language proof using LLMs, specifically GPT 5.4 through Codex and Claude Opus 4.6 through Claude Code, using the lean-lsp-mcp \cite{lean-lsp-mcp} and custom skills.

Reviewing the statements in {\scriptsize \texttt{showcase.lean}} is enough to attest to the semantic correctness of the formalized statement, conditional on the semantics of the Mathlib library itself\footnote{In particular, our formalization, and the trust one can place it, is possible only because of the enormous
efforts by the Mathlib community \cite{mathlib2020} in carefully formalizing mathematical objects in Lean and
ensuring that their meaning is what the mathematical community expects.}. Moreover, the statements in this file are simple and we hope that, at a broad level, they can be understood by mathematicians without Lean knowledge. The objective of this section is to explain the contents of {\scriptsize \texttt{showcase.lean}}. The reader may also consult \cite{armstrong2026formalization,chen2026thakur,hariharan2026milestone,ho2026erd,ilin2026semi} for a selection of recent examples of formalizations involving AI. 

We start by defining $\Omega$ as a probability space. In Mathlib, this is expressed as
\begin{leancode}
variable {Ω : Type*} 
         [MeasureSpace Ω] 
         [IsProbabilityMeasure (ℙ : Measure Ω)].
\end{leancode}
The first line defines $\Omega$ as a type (which one could think of as a set), the second line states that this type $\Omega$ is a measure space, that is, a set with a \textit{canonical} measure \lean{ℙ} and associated $\sigma$-algebra. The last line states that the canonical measure \lean{ℙ} on $\Omega$ is a probability measure (i.e.~that its volume is 1). We must write \lean{(ℙ : Measure Ω)} because Lean is not able to automatically infer that \lean{ℙ} refers to the probability measure on $\Omega$ unless we explicitly state so.

We now define a family of independent random variables in $L^2(\Omega)$. In Lean, a countable family of random variables in $L^2(\Omega)$ is a function from $\mathbb N$ to $L^2(\Omega)$ with the property that \emph{the function} is independent.
\begin{leancode}
variable (ξ : ℕ → Lp ℝ 2 (ℙ : Measure Ω))
         (hIndep : iIndepFun (fun i => ξ i) ℙ)
         (hMean : ∀ i, !$\mathbb E$![ξ i] = 0)
         (hVar : ∀ i, !$\mathbb E$![(ξ i : Ω → ℝ) ^ 2] = 1)
\end{leancode}
Random variables are functions, and Mathlib uses the notation of independent \emph{functions} instead of independent random variables. \lean{iIndepFun} defines the independence of an indexed family of random variables (if one was interested in two random variables instead of an indexed family, one would use \lean{IndepFun}). The \lean{iIndepFun} statement takes two inputs:
    \begin{enumerate}
        \item The family of random variables (in this case $i\mapsto \xi_i$).
        \item The probability measure (in this case \lean{ℙ}).
    \end{enumerate}
  In the third and fourth lines, \lean{hMean} and \lean{hVar} state the hypotheses that each random variable is mean zero and of variance one. 
Note that the fourth line has a type coercion of the form \lean{(ξ i : Ω → ℝ)}. This means that we consider $\xi_i$ as a function from $\Omega$ to $\mathbb R$. Originally, $\xi_i$ is an element of $L^2(\Omega)$, and this command coerces it into a particular function. This is needed because Mathlib does not, by default, have a square operation from $L^2$ to $L^1$, but squaring is well-defined for functions.

At this point, we can define the upper bound for the stability constant as a function of $A,B$:
\begin{leancode}
def StabilityConstant (A B : ℝ) : ℝ :=
  2 ^ 64 * max (A⁻¹ ^ 10) (A⁻¹ ^ 8 / (1 - B))
\end{leancode}
Theorem~\ref{thm:lean} proves stability for all functions in the topological closure of the span of the $\xi_i$ variables, which can be defined in Lean as
\begin{leancode}
def closedSpan (ξ : ℕ → Lp ℝ 2 (ℙ : Measure Ω)) :
    Submodule ℝ (Lp ℝ 2 (ℙ : Measure Ω)) :=
  (Submodule.span ℝ (Set.range ξ)).topologicalClosure
\end{leancode}
The function \lean{closedSpan} takes a sequence of random variables in $L^2(\Omega)$ and returns a vector subspace of $L^2(\Omega)$; namely, the topological closure of the span of the range of the map $i\mapsto \xi_i$. Lean defines vector spaces as \textit{modules}, as the fact that the base ring happens to be a field may be irrelevant for some statements.  With these definitions in hand, we can now define the main statement, which is equivalent to Theorem~\ref{thm:lean}. We note that the variables $\xi_i$, together with their hypotheses, are globally defined in the file and can be used in the statement of \lean{quantitative_stable_phase_retrieval_L2} without explicit redefinition.
\begin{leancode}
theorem quantitative_stable_phase_retrieval_L2
    {A B : ℝ} (hA : 0 < A) (hB : B < 1)
    (hL1_lower : ∀ i, A ≤ !$\mathbb E$![|ξ i|])
    (hL1_upper : ∀ i, i > 0 → !$\mathbb E$![|ξ i|] ≤ B)
    {X Y : Lp ℝ 2 (ℙ : Measure Ω)}
    (hX : X ∈ closedSpan ξ) (hY : Y ∈ closedSpan ξ)
    : 
    min ‖X - Y‖ ‖X + Y‖ ≤ StabilityConstant A B * ‖|X| - |Y|‖
\end{leancode}

\bibliographystyle{plain}
\bibliography{refs}

\end{document}